\def\red{\color{red}}
\def\rn{{\mathbb{R}^n}}
\def\zz{{\mathbb Z}}
\def\nn{{\mathbb N}}
\def\cx{{\mathcal X}}
\def\fz{\infty }
\def\lf{\left}
\def\r{\right}
\def\ls{\lesssim}
\def\noz{\nonumber}
\def\supp{\mathop\mathrm{\,supp\,}}
\def\Xint#1{\mathchoice
	{\XXint\displaystyle\textstyle{#1}}%
	{\XXint\textstyle\scriptstyle{#1}}%
	{\XXint\scriptstyle\scriptscriptstyle{#1}}%
	{\XXint\scriptscriptstyle\scriptscriptstyle{#1}}%
	\!\int}
\def\XXint#1#2#3{{\setbox0=\hbox{$#1{#2#3}{\int}$ }
		\vcenter{\hbox{$#2#3$ }}\kern-.6\wd0}}
\def\ta{\theta}
\def\f{\frac}
\def\vi{\varphi}
\def\({\left(}
\def \){ \right)}
\def\da{\delta}
\def\t{{\theta}}
\def\va{\varepsilon}
\def\BB{{\mathbb B}}
\newcommand{\wt}{\widetilde}
\newcommand{\p}{\partial}
\def\al{\alpha}
\def\RR{\mathbb{R}}
 \def\supp{\operatorname{supp}}
\def\BB{\mathbb{B}}
\def\ld{\lambda}
\def\og{\omega}
\newcommand{\bp}{ \begin{proof} }
	\newcommand{\ep}{ \end{proof} }
\def\Xint#1{\mathchoice
	{\XXint\displaystyle\textstyle{#1}}%
	{\XXint\textstyle\scriptstyle{#1}}%
	{\XXint\scriptstyle\scriptscriptstyle{#1}}%
	{\XXint\scriptscriptstyle\scriptscriptstyle{#1}}%
	\!\int}
\def\XXint#1#2#3{{\setbox0=\hbox{$#1{#2#3}{\int}$ }
		\vcenter{\hbox{$#2#3$ }}\kern-.6\wd0}}
\def\av{\Xint-}
\def\NN{{\mathbb N}}
\newtheorem{theorem}{Theorem}[section]
\newtheorem{lemma}[theorem]{Lemma}
\newtheorem{corollary}[theorem]{Corollary}
\newtheorem{proposition}[theorem]{Proposition}
\theoremstyle{definition}
\newtheorem{remark}[theorem]{Remark}
\newtheorem{definition}[theorem]{Definition}
\renewcommand{\appendix}{\par
	\setcounter{section}{0}%
	\setcounter{subsection}{0}%
	\setcounter{subsubsection}{0}%
	\gdef\thesection{\@Alph\c@section}%
	\gdef\thesubsection{\@Alph\c@section.\@arabic\c@subsection}%
	\gdef\theHsection{\@Alph\c@section.}%
	\gdef\theHsubsection{\@Alph\c@section.\@arabic\c@subsection}%
	\csname appendixmore\endcsname
}
\numberwithin{equation}{section}
\def\cX{\mathcal{X}}
\def\Lip{\operatorname{LIP}}
\def\lip {\operatorname{lip}}
\def\blip {\operatorname{Lip}}
\def\cC{\mathcal{C}}
\begin{document}
	
\title{\bf\Large Poincar\'e
Inequality Meets Brezis--Van Schaftingen--Yung Formula on Metric Measure Spaces
	\footnotetext{\hspace{-0.35cm} 2020 {\it
			Mathematics Subject Classification}. Primary 46E36;
		Secondary 26D10, 30L15, 26A33.
		\endgraf {\it Key words and phrases.} metric
		measure space, Poincar\'e inequality, Brezis--Van
        Schaftingen--Yung Formula, Sobolev semi-norm,
		fractional Sobolev inequality, Gagliardo--Nirenberg
		inequality.
		\endgraf The first author is supported by
		NSERC of Canada Discovery grant RGPIN-2020-03909.
		This project is also supported by the National
		Natural Science Foundation of China (Grant Nos.
		11971058, 12071197, 12122102 and 11871100)
		and the National Key Research
		and Development Program of China
		(Grant No. 2020YFA0712900).}}
\author{Feng Dai, Xiaosheng Lin, Dachun Yang\footnote{Corresponding
author, E-mail: \texttt{dcyang@bnu.edu.cn}/{\red November 18, 2021}/Final version.},
	\ Wen Yuan and Yangyang Zhang}
\date{}
\maketitle

\vspace{-0.8cm}

\begin{center}
\begin{minipage}{13cm}
{\small {\bf Abstract}\quad
Let  $(\mathcal{X}, \rho, \mu)$ be a  metric
measure space of homogeneous type which
supports a certain Poincar\'e inequality.
Denote by the symbol  $\mathcal{C}_{\mathrm{c}}^\ast(\mathcal{X})$
the space  of  all  continuous functions $f$ with compact support
satisfying that $\operatorname{Lip} f:=\limsup _{r \rightarrow 0}
\sup_{y\in B(\cdot, r)} |f(\cdot)-f(y)|/r$ is also a continuous function
with compact support and
$\operatorname{Lip} f=\lim _{r \rightarrow 0} \sup_{y\in B(\cdot, r)}
|f(\cdot)-f(y)|/r$ converges uniformly.
Let $p \in[1,\infty)$. In this article, the authors prove that,
for any $f\in\mathcal{C}_{\mathrm{c}}^\ast(\mathcal{X})$,
\begin{align*}
&\sup_{\lambda\in(0,\infty)}\lambda^p\int_{\mathcal{X}}
\mu\left(\left\{y\in \mathcal{X}:\ |f(x)-f(y)|>\lambda \rho(x,y)
[V(x,y)]^{\frac 1p}\right\}\right)\, d\mu(x)\\
&\quad\sim \int_{{\mathcal{X}}} [\operatorname{Lip}f(x)]^p  \,d\mu(x)	
\end{align*}
with the positive equivalence constants independent of $f$,
where $V(x,y):=\mu(B(x,\rho(x,y)))$.
This generalizes a recent surprising formula of H. Brezis,
J. Van Schaftingen, and P.-L. Yung from the $n$-dimensional 
Euclidean space ${\mathbb R}^n$
to $\mathcal{X}$. Applying this generalization, the
authors establish new fractional Sobolev and
Gagliardo--Nirenberg inequalities in $\mathcal{X}$.
All these results have
a wide
range of applications. Particularly,
when  applied to
two concrete examples,
namely, $\rn$ with weighted Lebesgue measure
and the complete $n$-dimensional Riemannian manifold with non-negative
Ricci curvature, all these results are new.
The proofs of these results strongly depend on
the geometrical
relation of differences and derivatives in the metric measure space and
the Poincar\'e inequality.}
\end{minipage}
\end{center}
	
\vspace{0.2cm}

\section{Introduction}

For any given $s\in(0,1)$ and $p\in[1,\infty)$, the \emph{homogeneous fractional
Sobolev space} $\dot{W}^{s,p}(\rn)$ consists of all measurable
functions $f$ on $\rn$ having the finite Gagliardo semi-norm
\begin{align}\label{wsp}
\|f\|_{\dot{W}^{s,p}(\rn)}:=\left[\int_{\rn}\int_{\rn}\frac{|f(x)-f(y)|^p}
{|x-y|^{n+sp}}\,dx\,dy\right]^{\frac{1}{p}}
=:\left\| \f {f(x)-f(y)} {|x-y|^{\f np+s}}\right\|_{L^p(\rn\times\rn)}.
\end{align}
These spaces play a key role in harmonic analysis
and partial differential equations (see, for instance, \cite{bbm02,m11,npv,crs10,cv11,mm11}).

A well-known \emph{drawback} of the Gagliardo semi-norm
in \eqref{wsp} is that one does not
recover the homogeneous Sobolev semi-norm $\|\nabla f\|_{L^p(\rn)}$ when $s=1$,
in which case the integral in \eqref{wsp} is infinite unless $f$
is a constant (see \cite{bbm,ref7,BSY212}), here and thereafter, for any
differentiable function $f$ on $\rn$, $\nabla f$ denotes the
gradient of $f$, namely, 
$$\nabla f:=\lf(\frac{\partial f}{\partial x_1},
\ldots,\frac{\partial f}{\partial x_n}\r).$$
An important approach to recover $\|\nabla f \|_{L^p(\rn)}$ out of
the Gagliardo semi-norm is due to Bourgain et al.
\cite{bbm01}, who in particular proved that
\begin{align*}
\lim_{s\in(0,1), s\to1}(1-s)\|f\|_{\dot{W}^{s,p}(\rn)}^p=C_{(p,n)}
 \|\nabla f\|_{L^p(\rn)}^p, \ \ \ \forall\, f\in \dot{W}^{1,p}(\rn),\ \forall\,p\in[1,\infty),
\end{align*}
where $C_{(p,n)}$ is a positive constant depending only on $p$ and $n.$
Very recently, Brezis et al.  \cite{ref8} (see also \cite{BSSY21})
discovered an alternative way to repair this \emph{defect}
by replacing the $L^p$ norm in \eqref{wsp} with the weak
$L^p$ quasi-norm [namely, $\|\cdot\|_{L^{p,\infty}(\rn\times\rn)}$].
For any given $p\in[1,\infty),$ Brezis et al. in
\cite{ref8} proved that there exist positive constants
$C_1$ and $C_2$ such that, for any
 $f\in C_{\mathrm{c}}^\infty(\rn)$,
\begin{align}\label{bsy}
C_1 \|\nabla f\|_{L^p(\rn)}\leq \left\|\frac{f(x)-f(y)}{|x-y|
^{\frac{n}{p}+1}}\right\|
_{L^{p,\infty}(\rn\times\rn)}\le C_2 \|\nabla f\|_{L^p(\rn)},
\end{align}
where
\begin{align*}
\left\|\frac{f(x)-f(y)}{|x-y|^{\frac{n}{p}+1}}\right\|_{L^{p,\infty}
(\rn\times\rn)}:&=\sup_{\ld\in(0,\infty)}\lambda
\left|\left\{(x,y)\in\rn\times\rn:\
\frac{|f(x)-f(y)|}{|x-y|^{\frac{n}{p}+1}}>
\lambda\right\}\right|^{\frac{1}{p}},
\end{align*}
here and thereafter, the symbol
$|E|$ denotes the Lebesgue
measure of a measurable set $E\subset \RR^m$ for any given $m\in\nn$,
and the symbol $C_{\mathrm{c}}^\fz(\rn)$ the
set of all infinitely differentiable functions on $\rn$
with compact support. The equivalence \eqref{bsy} in
particular allows Brezis et al. in \cite{ref8} to derive
some surprising alternative estimates of fractional Sobolev and
Gagliardo--Nirenberg
inequalities in some exceptional
cases involving $\dot{W}^{1,1}(\rn)$, where
the \emph{anticipated} fractional Sobolev and
Gagliardo--Nirenberg inequalities fail. For
more studies on fractional
Sobolev spaces, we refer the reader to \cite{GY21,DLYYZ21,BSSY21,BSY212, BSYar,Par}.

Let us also give a few comments on the proof of
\eqref{bsy} in \cite{ref8}. Indeed, a
substantially sharper lower bound was obtained
in \cite{ref8}, using a method of rotation and
the Taylor remainder theorem. On the other hand,
the stated
upper bound for any given $p\in[1,\infty)$ can be
deduced from
the Vitali covering lemma in one variable,  and a
method of rotation. Thus, the rotation invariance
of the space $L^p(\rn)$ seems to play a vital
role in the proof of
\eqref{bsy} in \cite{ref8}.

The main purpose in this article is to give an essential
extension of the main results of \cite{ref8}
[the equivalence \eqref{bsy}]. Such
extensions are fairly nontrivial because our
setting is a given metric measure  space of homogeneous type, in
the sense of Coifman and Weiss \cite{CW71,CW77}, that
is neither rotation invariance nor translation invariance.
We now describe this general setting. First, we recall the notion of  metric
measure spaces of homogeneous type.  For
more studies on  spaces of homogeneous type, we refer the reader to
\cite{BD20,BDL,L10,LW13,N06,NY97,HHL16,HHL17}. A
\emph{metric space} $(\mathcal{X},\rho)$ is a
non-empty
set $\mathcal{X}$ equipped with a \emph{metric} $\rho,$
namely, a non-negative function, defined on $\mathcal{X}
\times\mathcal{X},$ satisfying that, for any $x,$ $y,$ $z\in\mathcal{X},$
\begin{enumerate}[(i)]
	\item $\rho(x,y)=0$ if and only if $x=y;$
	\item $\rho(x,y)=\rho(y,x);$
	\item $\rho(x,z)\leq \rho(x,y)+\rho(y,z).$
\end{enumerate}
For any $x\in\mathcal{X}$ and $r\in(0,\infty),$ let
$$B(x,r):=\{y\in\mathcal{X}:\
\rho(x,y)<r\}$$ and
\begin{align*}
	\mathbb{B}:=\{B(x,r):\
	x\in\mathcal{X}\ \mathrm{and}\ r\in(0,\infty)\}.
\end{align*}
For any $\alpha\in(0,\infty)$ and any ball $B:=B(x_B,r_B)$ in $\mathcal{X}$, with $x_B\in\mathcal{X}$
and $r_B\in(0,\infty)$, let $\alpha B:=B(x_B,\alpha
r_B)$. For a given  metric space $(\mathcal{X},
\rho)$ and a given Borel measure $\mu,$ the
triple
$(\mathcal{X}, \rho, \mu)$ is called a \emph{metric measure space of
	homogeneous type}  if $\mu$ satisfies the following
\emph{doubling condition}: there exists a
constant $L_{\mu}\in[1,\infty)$ such that, for any ball
$B\subset \mathcal{X},$
\begin{align*}
	\mu(2B)\leq L_{\mu}\mu(B).
\end{align*}
From the above doubling condition, it follows that, for
any $\lambda \in[1,\infty)$ and any ball $B\subset\mathcal{X}$,
\begin{align}\label{1133}
\mu(\lambda B)\leq L_{\mu}\lambda
^{\log_2L_{\mu}}\mu(B).
\end{align}
Throughout this article, we always make the following \emph{assumptions}:
$\mu(B)\in(0,\infty)$ for any  $B\in\mathbb{B}$, and $\mu(\{x\})=0$ for any
$x\in\mathcal{X}.$

Now, we recall some notions to state the main result of this article.
A function $f:\cx\to \mathbb{C}$ on the metric space  $(\cx, \rho)$ is
called a \emph{Lipschitz function}
if there exists a positive constant $C$ such that,
 for any $x,\ y \in \cx$,
$$
|f(x)-f(y)| \leqslant C \rho(x, y).
$$
We denote by the \emph{symbol} $\operatorname{LIP}(\cx)$ the space of
all  Lipschitz functions on $\cx$.
For any  Lipschitz function $f$ defined on a metric space $(\cx, \rho)$
and  any $x \in \cx $, let
$$\operatorname{lip} f(x):=\liminf_{r\to  0} \sup _{y\in B(x, r)} \frac{|f(x)-f(y)|}{r}$$
and
$$\operatorname{Lip} f(x):=\limsup _{r \rightarrow 0} \sup_{y\in B(x, r)} \frac{|f(x)-f(y)|}{r}.$$
We denote by the \emph{symbol}
$BC({\mathcal{X}})$
the set of all bounded continuous functions on
${\mathcal{X}}$.
\begin{definition}\label{45622}
	Let $p,$ $q\in[1,\infty)$ and $(\cx, \rho, \mu)$ be a
	metric measure space of homogeneous type. A
	function $f\in\Lip(\cx)$ is said to
	satisfy a \emph{$(q,p)$-Poincar\'e inequality
	with constants} $C_1,$ $ C_2,$ $ \tau\in[1,\infty)$ if there
	exists a sequence $\{ \ell_B \}_{ B\in\BB}$
	of linear functionals on the space $BC(\cx)$
	such that, for any $\vi\in BC(\cx)$ and $B\in\BB$,
	\begin{equation}\label{3-9-a}
		\ell_B(1)=1,\ \ |\ell_B(\vi)|
		\leq C_1\left[ \f 1{\mu(B)} \int_{ B} |\vi (x)|^q
		\, d\mu(x)\right]^{\f1q},
	\end{equation}
	and
	\begin{equation}\label{2-8-eq}
		\left[\f 1 {\mu(B)}	\int_B |f(x)-\ell_B (f)|^q\,
		d\mu(x) \right]^{\f1q}\leq C_2 r_B \left[
		\f 1{\mu(\tau B)} \int_{\tau B} [\lip f(x)]^p
		\, d\mu(x)\right]^{\f1p}.
	\end{equation}
The space $(\cx, \rho, \mu)$ is said to satisfy a \emph{$(q,p)$-Poincar\'e inequality
	with constants} $C_1,$ $ C_2,$ $ \tau\in[1,\infty)$
if \eqref{3-9-a} and \eqref{2-8-eq} hold true
for any $f\in\Lip(\cx)$, and
the constants $C_1,$ $ C_2,$ and $\tau$ are independent of $f.$
\end{definition}

We denote by the \emph{symbol} $C_{\mathrm{c}}(\cX)$  the space of all
continuous functions with compact support  on $\cX$ and by the \emph{symbol}
$\cC_{\mathrm{c}}^\ast(\cX)$  the space  of all functions $f\in\Lip(\cX)$
such that $\blip f=\lip f\in C_{\mathrm{c}}(\cX) $  and
$$\operatorname{Lip} f(x)=\lim _{r \rightarrow 0} \sup_{y\in B(x, r)} \frac{|f(x)-f(y)|}{r}$$
uniformly  on  $x\in \cX$.
We use the \emph{symbol} $C_{\mathrm{c}}^2(\rn)$ to
denote the set of all twice continuously
differentiable functions on $\rn$ with compact support.
When $\cX:=\RR^n$ and $\rho$ is the Euclidean metric,
from the Taylor remainder theorem, it follows that $ C_{\mathrm{c}}^2(\RR^n)
\subset \cC_{\mathrm{c}}^\ast (\rn)$ (see Proposition \ref{1236} below).
For any $x,\ y\in\mathcal{X}$, let
$$ V (x, y) :=\mu\lf(B(x, \rho(x,y))
\r).$$
The main result of this article can  be stated as follows.
\begin{theorem}\label{thm-2-3}
	Let $p,\ q\in[1,\infty)$ and $(\cX, \rho,
	\mu)$ be a  metric
	measure space of homogeneous type. Assume that  $f\in\cC_{\mathrm{c}}^\ast (\cX)$
	satisfies a \emph{$(q,p)$-Poincar\'e
	inequality with constants} $C_1,$ $ C_2,$ $ \tau\in[1,\infty)$. Then
	\begin{equation}\label{2-9-eqq}
		\sup_{\ld\in(0,\infty)} 	\ld^p  \int_{{\mathcal{X}}}\int_{{\mathcal{X}}}
\mathbf{1}_{{D_\ld}}(x, y) \,d\mu(x) \,d\mu(y) \sim 	\int_{{\mathcal{X}}} [\blip f(x)]^p  \,d\mu(x),
	\end{equation}
	where
	\begin{align}\label{keydef}
		D_{\ld} :=\left\{ (x, y)\in {\mathcal{X}}\times {\mathcal{X}}:\
|f(x)-f(y)|>\ld \rho(x,y) [V(x,y)]^{\f1p} \right\}
	\end{align}
	and  the positive   equivalence constants depend only on $p$, $q,$ the
doubling constant of $\mu,$ and the  constants $C_1,$ $C_2,$ and $\tau$.
\end{theorem}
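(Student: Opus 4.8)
The plan is to prove the two inequalities ``$\gtrsim$'' and ``$\lesssim$'' in \eqref{2-9-eqq} separately. Throughout write $g:=\blip f=\lip f$ on $\cX$ (these coincide for $f\in\cC_{\mathrm{c}}^\ast(\cX)$) and abbreviate
$$\Phi(x,y):=\frac{|f(x)-f(y)|}{\rho(x,y)\,[V(x,y)]^{1/p}},$$
so that the left-hand side of \eqref{2-9-eqq} equals $\|\Phi\|_{L^{p,\infty}(\cX\times\cX,\,\mu\times\mu)}^p=\sup_{\ld>0}\ld^p(\mu\times\mu)(\{\Phi>\ld\})$. I will use repeatedly the doubling property of $\mu$, the standing hypotheses $\mu(B)\in(0,\infty)$ and $\mu(\{x\})=0$, the compactness of $\supp f$ and $\supp g$, and the two consequences of $f\in\cC_{\mathrm{c}}^\ast(\cX)$ that $g$ is continuous and that $r\mapsto\sup_{z\in B(\cdot,r)}|f(\cdot)-f(z)|/r$ tends to $g$ uniformly on $\cX$.

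\emph{Lower bound.} Fix a small $\eta>0$. By the uniform convergence above and the uniform continuity of $g$, there is $r_\eta>0$, independent of $x$, with the following property: for every $x\in\cX$ with $g(x)\ge 8\eta$ and every $r\in(0,r_\eta]$ one can pick $z=z(x,r)$ with $\rho(x,z)\sim r$ and $|f(x)-f(z)|\ge\tfrac12 g(x)\rho(x,z)$, and then, using the uniform Lipschitz-type control at $z$ together with $\sup_{B(x,r)}g\le g(x)+o(1)$, a radius $\sigma\in[\tfrac18,\tfrac14]$ such that $\rho(x,y)\sim r$ and $|f(x)-f(y)|>\tfrac14 g(x)\rho(x,y)$ for all $y$ in the ball $B_{x,r}:=B(z(x,r),\sigma r)$. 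By the doubling property $\mu(B_{x,r})\gtrsim\mu(B(x,r))$, and for $y\in B_{x,r}$ one has $\Phi(x,y)\gtrsim g(x)/[\mu(B(x,r))]^{1/p}$. Now, given $\ld$ large enough (a threshold depending on $\eta$, $\|g\|_{L^\infty}$ and $\inf_{\{g\ge 8\eta\}}\mu(B(\cdot,r_\eta))$ — harmless, since at the end we take a supremum over $\ld$ and then let $\eta\downarrow 0$), for each such $x$ choose $r=r(x)\le r_\eta$ with $\mu(B(x,r(x)))\sim(g(x)/\ld)^p$; this is possible because $s\mapsto\mu(B(x,s))$ is non-decreasing and left-continuous with jumps bounded by the doubling constant and $\mu(\{x\})=0$. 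After fixing the implicit constants, $B_{x,r(x)}\subseteq\{y:\Phi(x,y)>\ld\}$ and $\mu(B_{x,r(x)})\gtrsim(g(x)/\ld)^p$. Integrating in $x$ over $\{g\ge 8\eta\}$ gives $(\mu\times\mu)(\{\Phi>\ld\})\gtrsim\ld^{-p}\int_{\{g\ge 8\eta\}}g^p\,d\mu$; taking the supremum over admissible $\ld$ and then $\eta\downarrow0$ yields ``$\gtrsim$'' with the stated dependence of constants. (The Poincar\'e inequality plays no role in this half.)

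\emph{Upper bound.} This is where \eqref{2-8-eq} is decisive. Iterating \eqref{3-9-a}--\eqref{2-8-eq} along the chains of concentric balls $\{B(x,2^{-k}r)\}_{k\ge0}$ and $\{B(y,2^{-k}r)\}_{k\ge0}$ with $r:=\rho(x,y)$ (using $\ell_B(1)=1$ to replace each $f$ by $f$ minus a constant, the doubling property to pass between consecutive scales, and the continuity of $f$ together with \eqref{3-9-a} so that $\ell_{B(x,2^{-k}r)}(f)\to f(x)$), and comparing $\ell_{B(x,r)}(f)$ and $\ell_{B(y,r)}(f)$ through $\ell_{B(x,Cr)}(f)$, one obtains the pointwise estimate, valid for all $x,y\in\cX$,
\begin{equation*}
|f(x)-f(y)|\lesssim\rho(x,y)\Big(\big[M_{C\rho(x,y)}(g^p)(x)\big]^{1/p}+\big[M_{C\rho(x,y)}(g^p)(y)\big]^{1/p}\Big),
\end{equation*}
where $M_\delta h(x):=\sup_{0<s\le\delta}\frac1{\mu(B(x,s))}\int_{B(x,s)}h\,d\mu$ and $C$ depends only on $\tau$; in particular, since $g$ has compact support, $f(x)=f(y)$ for $\rho(x,y)$ small whenever $x$ is far from $\supp g$, so $f$ is locally constant, hence (compact support) vanishes, off a bounded neighbourhood of $\supp g$. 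Consequently, if $\Phi(x,y)>\ld$ then, up to interchanging $x$ and $y$ (and using doubling to replace $V(y,x)$ by $V(x,y)$), $\ld^p\mu(B(x,\rho(x,y)))\lesssim M_{C\rho(x,y)}(g^p)(x)$, i.e.\ there is a scale $s(x,y)\le C\rho(x,y)$ with $\mu(B(x,\rho(x,y)))\lesssim\ld^{-p}\dashint_{B(x,s(x,y))}g^p\,d\mu$. Using the monotonicity of $M_\delta$ and of $\mu(B(x,\cdot))$, for each fixed $x$ the ``$x$-dominant'' slice $\{y:\Phi(x,y)>\ld\}$ is contained in a ball $B(x,R(x))$ with $\mu(B(x,R(x)))\lesssim\ld^{-p}\dashint_{B(x,\sigma(x))}g^p\,d\mu$ for some $\sigma(x)\le CR(x)$. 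Integrating over $x$, interchanging the order of integration, and bounding $\int_\cX\mu(B(x,\sigma(x)))^{-1}\mathbf 1_{B(x,\sigma(x))}(z)\,d\mu(x)$ by a constant depending only on the doubling constant — via a Vitali/Besicovitch-type covering argument that is uniform in $\ld$ — gives $(\mu\times\mu)(\{\Phi>\ld\})\lesssim\ld^{-p}\int_\cX g^p\,d\mu$; adding the symmetric ($y$-dominant) contribution finishes ``$\lesssim$''.

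\emph{Main obstacle.} I expect the upper bound, and within it the covering/overlap step turning the per-slice estimate $\mu(S_x)\lesssim\ld^{-p}\dashint_{B(x,\sigma(x))}g^p$ into the global bound $(\mu\times\mu)(\{\Phi>\ld\})\lesssim\ld^{-p}\int g^p$ with constants independent of \emph{both} $f$ and $\ld$, to be the crux. In \cite{ref8} this is done in one variable by a Vitali covering on lines, exploiting the Euclidean line/rotation structure; here there are no lines and no rotations, so the Poincar\'e chains must play the role of the one-dimensional segments, and one must organize the scales $s(x,y)$ produced by the localized maximal function into a family of balls with bounded ($\mu$-weighted) overlap. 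The telescoping pointwise estimate and the careful treatment of the region far from $\supp g$ (where \eqref{2-8-eq} forces $f$ to be locally constant, hence zero) are the other places where the Poincar\'e inequality and the structure of $\cC_{\mathrm{c}}^\ast(\cX)$ are genuinely used.
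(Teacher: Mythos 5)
Your lower bound is, up to cosmetic choices, the same argument the paper uses (their Lemma~\ref{lem-2-33} and Theorem~\ref{thm-3-1}): select a nearby point $z(x,r)$ realizing the difference quotient, surround it with a small ball contained in the superlevel set, pick $r(x)$ so that $\mu(B(x,r(x)))\sim(g(x)/\ld)^p$, integrate over the set where $g$ is bounded away from $0$ and $\infty$, and send $\ld\to\infty$, $\eta\downarrow 0$. That half is fine and not worth further comment.

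The upper bound is where the proposal does not close, and the gap is precisely the one you flag as the ``main obstacle'' but resolve incorrectly. After the telescoping Poincar\'e chain you compress the sum over scales into the single quantity $M_{C\rho(x,y)}(g^p)(x)$, which discards the geometric decay present in the chain. The paper keeps this decay explicitly: Lemma~\ref{lem-2-3} gives $|f(x)-\ell_{B_1}(f)|^p\lesssim r^p\sum_{j\geq0}2^{-j}\dashint_{2^{-j}\tau B}[\lip f]^p\,d\mu$, and it is the factor $2^{-j}$ that allows the pigeonhole argument to produce, for each $(x,y)$ in the bad set, some scale $j$ with $\dashint_{2^{-j}\tau\wt B_{x,y}}g^p>c_22^{j/2}\ld^p\mu(\wt B_{x,y})$ --- note the \emph{gain} $2^{j/2}$. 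Your version has no such gain, and without it the $j$-sum does not converge.

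More seriously, the overlap bound you invoke, $\sup_z\int_{\cX}\mu(B(x,\sigma(x)))^{-1}\mathbf 1_{B(x,\sigma(x))}(z)\,d\mu(x)\lesssim 1$, is simply false for a general radius function $\sigma(\cdot)$: already in $\rn$ with Lebesgue measure and $\sigma(x)=2|x-z|$ the integral is $\int_{\rn}c_n|x-z|^{-n}\,dx=\infty$. Besicovitch, which you mention, is not available in an arbitrary doubling metric space, and Vitali alone does not give a pointwise overlap estimate of this type. The nothing is salvaged by the per-$x$ consolidation into a single ball $B(x,R(x))$; the quantity $\int_{\cX}\mu(B(x,R(x)))\,d\mu(x)$ is an integral, not a union, and a $5r$-covering lemma controls unions, not integrals. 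What the paper actually does is different in kind: for each dyadic level $j$ it forms the family of ``bad'' product balls $\{(2^{-j}\tau B)\times(\tau B)\}_B$, runs Vitali \emph{in the product space} $(\cX\times\cX,\rho_j)$ to extract a subfamily whose \emph{first factors} $2^{-j}\tau B_{j,k}$ are pairwise disjoint, uses disjointness to get $\sum_k\int_{2^{-j}\tau B_{j,k}}g^p\leq\int_{\cX}g^p$, and then uses the $2^{-j/2}$ gain to sum over $j$. This disjointification-plus-geometric-gain mechanism is the genuine substitute for the one-dimensional Vitali-on-lines argument of Brezis--Van Schaftingen--Yung; your overlap bound is not. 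To repair the upper bound you would need to retain the $2^{-j}$ in the telescoping estimate, stratify by $j$ with a strict gain, and then disjointify in the product, which is exactly Theorem~\ref{thm-2-2}.

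Two smaller remarks. Your observation that $f$ vanishes off a bounded neighbourhood of $\supp g$ ``because $\eqref{2-8-eq}$ forces $f$ to be locally constant'' is a detour the paper avoids entirely by restricting to $\cX_N=B(x_0,N)$ and invoking monotone convergence; you don't need to argue about vanishing. Also, the claim that the $x$-dominant slice is \emph{contained in a ball} $B(x,R(x))$ with the averaged bound requires passing to a sequence realising the supremum $R(x)$ and extracting a limit of the scales $s(x,y_n)$, which is fussy but not where the real trouble is.
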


Using Theorem \ref{thm-2-3}, we are able to obtain some analogues
of fractional Sobolev and Gagliardo--Nirenberg
type inequalities for a general doubling measure $\mu$
on $\cx$ that satisfies certain Poincar\'e inequality (see Corollaries \ref{corollart6.5}
and \ref{corollary6.6} below), which correspond to the endpoint
cases of \cite[Corollaries 5.1 and 5.2]{ref8}.
\begin{remark}
\begin{itemize}
 	\item[$\mathrm{(i)}$] The formula \eqref{2-9-eqq} gives an equivalence
 		between the Sobolev semi-norm and a quantity involving the
 		difference of the function $f$. It is quite
 		remarkable that such an equivalence holds true
 		for the  metric
 		measure space of homogeneous type, $(\cX, \rho, \mu)$, that
 		admits a $p$-Poincar\'e inequality.
 		Indeed, finding an appropriate way to
 		characterize the smoothness of functions
 		via their finite differences is a
 		notoriously difficult problem in approximation
 		theory, even for some simple weights in
 		one variable (see \cite{K15, MT} and the
 		references therein). A major difficulty
 		comes from the fact that difference
 		operators $\Delta_h f:=f(\cdot +h)-f(\cdot)$
 		for any $h\in\rn$ are no longer bounded
 		on general weighted $L^p$ spaces.
 		
 		\item[$\mathrm{(ii)}$] In the special case of $\cx:=\rn,$ $d\mu(x)=dx$,
        and $\rho(x,y):=|x-y|$ for any $x,\ y\in\rn,$
 		Theorem \ref{thm-2-3} is due to Brezis et al. \cite{ref8}.
 		However, since proofs in \cite{ref8} largely
 		rely on the rotation invariance of the
 		Lebesgue measure on $\rn$, those proofs do not seem
 		to work in the general setting considered here.	
\end{itemize}
\end{remark}

Indeed,
the proof of the lower estimate in \eqref{2-9-eqq}
is
fairly nontrivial. On one hand, the proof of
the lower estimate in
 \eqref{bsy}
in the unweighted case in the article \cite{ref8} is
based on a method of polar coordinate transformation,
and it is impossible to use this method in the metric measure space.  Using
the geometrical relation of differences and  derivatives in
the metric measure space
(see Lemma \ref{lem-2-33} below),
we overcome this difficulty by accurately estimating the inclusion relation between
the set related to the difference and the set related to the derivative [see \eqref{2-5} below].
The proof of the upper estimate of \eqref{bsy}
in the article \cite{ref8} is
based on a method of rotation,
and  seems to be also inapplicable in the metric measure space here. To overcome these
difficulties, we first introduce a suitable form of the Poincar\'e inequality in
the metric measure space of homogeneous type (see Definition \ref{45622} above).
Applying the Poincar\'e inequality, then we skillfully use the Vitali covering
lemma in the product space [see \eqref{5-10b} below] to replace the
method of rotation in \cite{ref8},
which plays a key role in
the proof of the upper estimate of \eqref{2-9-eqq}.

The remainder of this article is organized as follows.

In Section \ref{section-3}, we establish the characterization of
the Sobolev semi-norm on  the  metric measure space of homogeneous type,
$(\cx, \rho, \mu)$ (see Theorem \ref{thm-2-3} above),
which can be easily deduced from two more general
results: Theorem \ref{thm-3-1} below and Theorem \ref{thm-2-2} below.
Indeed, by
the geometrical relation of differences and derivatives in
the metric measure space (see Lemma \ref{lem-2-33} below), we characterize the Sobolev semi-norm
in $\cx$, without
the assumption that $f\in\Lip(\cx)$ satisfies a
$(q,p)$-Poincar\'e inequality  in Theorem \ref{thm-2-3} (see Theorem \ref{thm-3-1} below).
To obtain the upper estimate of \eqref{2-9-eqq}, we need to overcome some
essential difficulties.  Using a skillful
decomposition, the Vitali covering lemma in the product space, and the Poincar\'e inequality,
we avoid the  rotation invariance used in
\cite{ref8} and hence overcome these difficulties (see Theorem \ref{thm-2-2} below).

Applying Theorem \ref{thm-2-3}, in Section \ref{section2}, we establish
fractional Sobolev and fractional Gagliardo--Nirenberg type inequalities on $\cx$
(see Corollaries \ref{corollart6.5} and \ref{corollary6.6} below).

In Section \ref{section3}, we apply Theorem \ref{thm-2-3} and
Corollaries \ref{corollart6.5} and \ref{corollary6.6}, respectively,
to two concrete examples of metric measure spaces,
namely, $\rn$ with weighted Lebesgue measure
and the complete $n$-dimensional Riemannian manifold with non-negative Ricci
curvature. All these results are totally new.

Finally, we make some conventions on notation.
Let $\nn:=\{1,2,\ldots\}$ and $\zz_+:=\nn\cup\{0\}$.
We also use
$C_{(\alpha,\beta,\ldots)}$ to denote a positive
constant depending on the indicated parameters $\alpha,
\beta,\ldots.$ The symbol $f\lesssim g$ means
that $f\le Cg$. If $f\lesssim g$ and $g\lesssim f$,
we then write $f\sim g$. If $f\le Cg$ and $g=h$ or
$g\le h$, we then write $f\ls g\sim h$ or $f\ls g\ls h$,
\emph{rather than} $f\ls g=h$ or $f\ls g\le h$.
We use $\mathbf{0}$ to denote the \emph{origin} of $\rn$.
If $E$ is a subset of $\rn$, we denote by $\mathbf{1}_E$
its characteristic function.
For any $\alpha\in(0,\infty)$ and any ball $B:=B(x_B,r_B)$
in $\rn$, with $x_B\in\rn$ and $r_B\in(0,\infty)$, we
let $\alpha B:=B(x_B,\alpha r_B)$. Finally, for any
$q\in[1,\infty]$, we denote by $q'$ its
\emph{conjugate exponent}, namely, $1/q+1/q'=1$.
\section{Proof of Theorem \ref{thm-2-3}}\label{section-3}
In this section, we establish the characterization of
the Sobolev semi-norm on  the  metric measure space of homogeneous type, $(\cx, \rho, \mu).$

\subsection {Proof of Theorem \ref{thm-2-3}: Lower Estimate}\label{subsection2.1}
In this subsection, we prove the lower bound
of Theorem \ref{thm-2-3}. Indeed, the stated
lower bound in Theorem \ref{thm-2-3} follows
directly from the following theorem.
\begin{theorem}\label{thm-3-1}
	Let $(\cX, \rho, \mu)$ be a  metric measure space of homogeneous type and
	$p\in[1,\infty)$.   Then there exists
	a positive constant $C,$ depending only on $p$ and the doubling constant of $\mu$, such that,  for any $f\in \cC_{\mathrm{c}}^\ast(\cX),$
	\begin{equation}\label{7-1}
	\liminf_{\ld\to \infty} \ld^p \int_{\cX} \int_{\cX} \mathbf{1}_{D_\ld}
(x, y) \,d\mu(x) \,d\mu(y)	\ge C  \int_{\cX} [\blip f(x)] ^p  \,d\mu(x),
	\end{equation}
	where $D_\ld$ is as in \eqref{keydef}.
\end{theorem}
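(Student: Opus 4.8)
The plan is, first, to rewrite the left-hand side of \eqref{7-1} by the Tonelli theorem as $\liminf_{\lambda\to\infty}\lambda^p\int_\cX\mu(A_\lambda(x))\,d\mu(x)$, where $A_\lambda(x):=\{y\in\cX:\ (x,y)\in D_\lambda\}$ and $D_\lambda$ is as in \eqref{keydef}; the task then reduces to a \emph{pointwise} lower bound of the shape $\mu(A_\lambda(x))\gtrsim[(\operatorname{Lip}f(x)-\epsilon)/\lambda]^p$, valid --- for each fixed small $\epsilon>0$ --- once $\lambda$ is large, at every $x$ with $\operatorname{Lip}f(x)>10\epsilon$, with implicit constant depending only on $p$ and the doubling constant $L_\mu$ of $\mu$. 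Granting this, integrating over $\{x:\ \operatorname{Lip}f(x)>10\epsilon\}$, taking $\liminf_{\lambda\to\infty}$, and then letting $\epsilon\downarrow0$ (monotone convergence, the integrand vanishing where $\operatorname{Lip}f=0$) yields \eqref{7-1}. I expect the pointwise bound to be the real work: since $\cX$ carries no rotation or polar-coordinate structure, the argument of \cite{ref8} has no analogue, and the difficulty is that $\operatorname{Lip}f(x)$ a priori produces only \emph{one} good point near $x$, whereas we need a whole set of good $y$'s, of measure $\sim[\operatorname{Lip}f(x)/\lambda]^p$.

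The way around this is to fix that good point $y_x$ at the \emph{right scale} and then thicken it to a ball. Concretely: fix $\epsilon\in(0,1)$, and use $f\in\cC_{\mathrm{c}}^\ast(\cX)$ to choose $\delta\in(0,\infty)$ with $\bigl|\sup_{y\in B(x,r)}|f(x)-f(y)|/r-\operatorname{Lip}f(x)\bigr|<\epsilon$ for all $x\in\cX$ and $r\in(0,\delta)$, and --- since $\operatorname{Lip}f=\lip f\in C_{\mathrm{c}}(\cX)$ is uniformly continuous --- $\eta_0\in(0,\infty)$ with $|\operatorname{Lip}f(z)-\operatorname{Lip}f(w)|<\epsilon$ whenever $\rho(z,w)<\eta_0$. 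Fix $x$ with $L:=\operatorname{Lip}f(x)>10\epsilon$, set $\theta:=[(L-\epsilon)/\lambda]^p$ and $K:=L_\mu(8/3)^p+1$. Since $\mu(\{x\})=0$ forces $\mu(B(x,r))\downarrow0$ as $r\downarrow0$, the doubling condition lets one pick $r_x\in(0,\infty)$ with $\mu(B(x,r_x))\le\theta/K<\mu(B(x,2r_x))$, hence $\theta/(KL_\mu)<\mu(B(x,r_x))\le\theta/K$; because $\mu(B(x,r_x))\to0$, we have $r_x\to0$ as $\lambda\to\infty$, so for $\lambda$ large I may assume $r_x<\min\{\delta,\eta_0\}$. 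Then pick $y_x\in B(x,r_x)$ with $|f(x)-f(y_x)|>(L-\epsilon)r_x$; comparing with the companion bound $|f(x)-f(y_x)|\le(L+\epsilon)\rho(x,y_x)$ (from the same uniform estimate, letting the radius shrink to $\rho(x,y_x)$) gives $r_x/2<\rho(x,y_x)<r_x$.

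Now put $\rho_x:=r_x/8$; the crucial inclusion is $B(y_x,\rho_x)\subseteq A_\lambda(x)$. For $y\in B(y_x,\rho_x)$, the uniform estimate together with $\operatorname{Lip}f(y_x)\le L+\epsilon$ (uniform continuity of $\operatorname{Lip}f$, since $\rho(x,y_x)<\eta_0$) gives $|f(y_x)-f(y)|<(L+2\epsilon)r_x/8$, so $|f(x)-f(y)|\ge|f(x)-f(y_x)|-|f(y_x)-f(y)|>(L-\epsilon)r_x-(L+2\epsilon)r_x/8\ge\frac{3}{4}Lr_x$; on the other hand $\rho(x,y)<\rho(x,y_x)+\rho_x<2r_x$, so $V(x,y)=\mu(B(x,\rho(x,y)))\le\mu(B(x,2r_x))\le L_\mu\mu(B(x,r_x))\le L_\mu\theta/K$, and hence $\lambda\rho(x,y)[V(x,y)]^{1/p}<2(L_\mu/K)^{1/p}(L-\epsilon)r_x\le\frac{3}{4}Lr_x$ by the choice of $K$. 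Since moreover $\rho(x,y)>\rho(x,y_x)-\rho_x>3r_x/8>0$, we conclude $|f(x)-f(y)|>\lambda\rho(x,y)[V(x,y)]^{1/p}$, i.e., $(x,y)\in D_\lambda$, proving the inclusion. Finally $r_x/2<\rho(x,y_x)<r_x$ yields $B(x,2r_x)\subseteq B(y_x,3r_x)$, so \eqref{1133} gives $\mu(A_\lambda(x))\ge\mu(B(y_x,\rho_x))\gtrsim\mu(B(x,r_x))\gtrsim\theta$, which is exactly the desired pointwise bound.

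Putting it all together: for all large $\lambda$, $\lambda^p\int_\cX\int_\cX\mathbf{1}_{D_\lambda}(x,y)\,d\mu(x)\,d\mu(y)\gtrsim\int_{\{\operatorname{Lip}f>10\epsilon\}}(\operatorname{Lip}f(x)-\epsilon)^p\,d\mu(x)$; taking $\liminf_{\lambda\to\infty}$ and then $\epsilon\downarrow0$ gives \eqref{7-1} with $C$ depending only on $p$ and the doubling constant of $\mu$. The only genuinely nonroutine step is the coupling of the scale selection of $r_x$ with the inclusion $B(y_x,\rho_x)\subseteq A_\lambda(x)$ --- a quantitative expression of the geometrical relation between differences and derivatives on $\cX$ (cf. Lemma \ref{lem-2-33}); the remaining points (notably the joint Borel measurability of $(x,y)\mapsto V(x,y)$, hence of $D_\lambda$, which makes $x\mapsto\mu(A_\lambda(x))$ measurable and legitimizes Tonelli) are standard.
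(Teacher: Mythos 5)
Your argument is essentially the same as the paper's: you construct, at every $x$ where $\operatorname{Lip}f(x)$ is not too small, a ``good point'' $y_x$ at a scale $r_x$ chosen so that $\mu(B(x,r_x))\approx\theta/K$ with $\theta=[(L-\epsilon)/\lambda]^p$, then thicken $y_x$ to a ball $B(y_x,r_x/8)\subset A_\lambda(x)$; this is precisely what the paper's Lemma~\ref{lem-2-33} packages (there the good point is $y_{x,r}$ and the thickened ball is $B(y_{x,r},r/6)\subset S(x,r)$), and the paper's $\varepsilon_\lambda(x)=2^{-N(x,\lambda)}$ plays the role of your $r_x$, only on the dyadic scale. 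The estimates check out: your choices of $K=L_\mu(8/3)^p+1$, $\rho_x=r_x/8$, and $L>10\epsilon$ do yield $(7L-10\epsilon)/8>3L/4$ and $2(L_\mu/K)^{1/p}<3/4$, so $B(y_x,\rho_x)\subset A_\lambda(x)$, and doubling then gives $\mu(A_\lambda(x))\gtrsim\theta$.

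The one place you are light is the phrase ``for $\lambda$ large I may assume $r_x<\min\{\delta,\eta_0\}$'' combined with the conclusion ``for all large $\lambda$, $\lambda^p\iint\mathbf{1}_{D_\lambda}\gtrsim\int_{\{\operatorname{Lip}f>10\epsilon\}}(\operatorname{Lip}f-\epsilon)^p$.'' As written this requires $\sup_{x:\operatorname{Lip}f(x)>10\epsilon}r_x\to0$ as $\lambda\to\infty$, i.e.\ a \emph{uniform} smallness in $x$. This is true, but it is not automatic from $\mu(B(x,r_x))\to0$ alone, because in a general metric measure space the measure of a ball of fixed radius depends on its center; one must use that $\{\operatorname{Lip}f>10\epsilon\}$ sits inside a fixed ball (compact support) together with doubling to get $\inf_x\mu(B(x,r_0))>0$ for each fixed $r_0>0$. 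The paper makes exactly this point its claim~\eqref{2-3} and proves it by contradiction; you should either reproduce that short argument, or sidestep the uniformity entirely by first applying Fatou, $\liminf_\lambda\lambda^p\int_\cX\mu(A_\lambda(x))\,d\mu(x)\ge\int_\cX\liminf_\lambda\lambda^p\mu(A_\lambda(x))\,d\mu(x)$, after which the pointwise (non-uniform) bound at each fixed $x$ suffices. With that repair the proof is complete and matches the paper's in substance, differing only in that you inline the good-ball construction rather than extracting it as a separate lemma.
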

Before we prove Theorem \ref{thm-3-1}, we need
the following lemma, which establishes  the geometrical relation of differences and derivatives in
the metric measure space.
\begin{lemma}\label{lem-2-33}
Let $(\cX, \rho, \mu)$ be a  metric measure space of homogeneous type
and  $f\in \cC_{\mathrm{c}}^\ast(\cX)$.  Then, for any
given $\da\in (0, 1),$ there exists a
positive constant $r_\da \in (0, \da),$ depending only on $\da$
and $f,$ such that, for  any $x\in \cX$ with $\blip f(x)\in(4\da,\infty)$,
and any $r\in(0,r_\da]$,
	\begin{equation*}
		\mu(S(x,r))\ge c_0 \mu(B(x,r)),
	\end{equation*}
where the positive constant $c_0$ depends only on the doubling constant of $\mu$ and
	\begin{equation}\label{2-1}
		S(x,r):=\left\{ z\in B(x, r):\   |f(z)-f(x)| \ge \f1{8}  \blip f(x)   \rho(z,x)\right\}.
	\end{equation}
\end{lemma}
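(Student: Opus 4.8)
The plan is to extract a quantitative lower bound on the "bad set" $S(x,r)$ from the hypothesis $\blip f(x)>4\da$ by exploiting the uniform convergence built into the definition of $\cC_{\mathrm{c}}^\ast(\cX)$, together with the doubling property of $\mu$. First I would unwind the definition of $\blip f(x)=\lim_{r\to 0}\sup_{y\in B(x,r)}|f(x)-f(y)|/r$: since this limit is attained \emph{uniformly} in $x$, I can choose $r_\da\in(0,\da)$, depending only on $\da$ and $f$, so that for all $x\in\cX$ and all $r\in(0,r_\da]$ one has
$$
\sup_{y\in B(x,r)}\frac{|f(x)-f(y)|}{r}\ge \blip f(x)-\da.
$$
When $\blip f(x)>4\da$ this gives $\sup_{y\in B(x,r)}|f(x)-f(y)|\ge (\blip f(x)-\da)\,r\ge \tfrac34\blip f(x)\,r$, so there exists a point $z_0=z_0(x,r)\in B(x,r)$ with
$$
|f(x)-f(z_0)|\ge \tfrac12\,\blip f(x)\,r,
$$
after shrinking $r_\da$ a little more; morally, $z_0$ realizes a difference of size comparable to $\blip f(x)\cdot r$ near the boundary of $B(x,r)$.

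Next I would convert this single good point $z_0$ into a set of good points of positive measure. The idea is that if $z_0$ is "far" from $x$ in the sense that $\rho(z_0,x)$ is itself comparable to $r$, then every point $z$ in a small ball $B(z_0,\eta r)$ around $z_0$ still satisfies the inequality defining $S(x,r)$, provided $\eta$ is small enough depending only on $\blip f$ at scale $r$. Concretely, for $z\in B(z_0,\eta r)$,
$$
|f(z)-f(x)|\ge |f(z_0)-f(x)|-|f(z)-f(z_0)|\ge \tfrac12\blip f(x)\,r - \Lip_r\,\eta r,
$$
where $\Lip_r:=\sup_{B(x,2r)}|f(\cdot)-f(\cdot\cdot)|/\rho(\cdot,\cdot\cdot)$ is controlled (again by uniform convergence, for $r\le r_\da$) by something like $\blip f(x)+\da\le 2\blip f(x)$ on the relevant ball; simultaneously $\rho(z,x)\le\rho(z_0,x)+\eta r\le (1+\eta)r$. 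Choosing $\eta$ a fixed small absolute constant then forces $|f(z)-f(x)|\ge\tfrac18\blip f(x)\,\rho(z,x)$ for all $z\in B(z_0,\eta r)$, i.e. $B(z_0,\eta r)\subset S(x,2r)$ (a harmless change of scale). Hence
$$
\mu(S(x,2r))\ge \mu(B(z_0,\eta r))\gtrsim \mu(B(z_0,2r))\gtrsim \mu(B(x,4r))\gtrsim \mu(B(x,2r)),
$$
where the second and fourth inequalities use doubling and the containment $B(x,r)\subset B(z_0,2r)$ (since $z_0\in B(x,r)$), and $c_0$ depends only on $L_\mu$; relabeling $2r$ as $r$ finishes the estimate.

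The subtle point — and the step I expect to be the main obstacle — is the dichotomy hidden in "$z_0$ is far from $x$." If $\rho(z_0,x)$ is tiny compared to $r$, then $B(z_0,\eta r)$ need not sit inside $B(x,Cr)$ with the right radius comparison, and more importantly $|f(x)-f(z_0)|\ge\tfrac12\blip f(x)\,r$ combined with a small $\rho(z_0,x)$ would actually contradict the (uniform, quantitative) Lipschitz bound $|f(x)-f(z_0)|\le \Lip_{r}\,\rho(z_0,x)\le 2\blip f(x)\,\rho(z_0,x)$ once $\rho(z_0,x)<r/4$. So in fact the contradiction rules out the "close" case automatically: any $z_0$ achieving the large difference must satisfy $\rho(z_0,x)\ge r/4$, which is exactly the lower bound on $\rho(z_0,x)$ that the ball-covering argument in the previous paragraph needs. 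I would organize the write-up so that this quantitative Lipschitz bound — valid uniformly for all small $r$ precisely because $f\in\cC_{\mathrm{c}}^\ast(\cX)$ — is stated first and then used both to locate $z_0$ away from $x$ and to control $|f(z)-f(z_0)|$ on $B(z_0,\eta r)$; the doubling condition \eqref{1133} then does the rest with purely geometric constants, and one checks that $r_\da$ can be taken in $(0,\da)$ and $c_0$ independent of $\da$.
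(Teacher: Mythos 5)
Your proposal is correct and follows essentially the same route as the paper: extract a point $z_0\in B(x,r)$ realizing a difference of size $\gtrsim \blip f(x)\,r$ via the uniform convergence hypothesis, then use a triangle inequality plus a second application of uniform convergence (and uniform continuity of $\blip f$) to show a small ball $B(z_0,\eta r)$ lies in $S(x,r)$ (or $S(x,2r)$, which you relabel), and finish with doubling. The only differences are cosmetic: the paper places the auxiliary point in $B(x,r/2)$ so the small ball stays inside $B(x,r)$ and thus gets $S(x,r)$ directly, and it bounds $|f(z)-f(z_0)|$ by applying the uniform-convergence estimate at $z_0$ with radius $r/6$ rather than passing through a local Lipschitz constant $\Lip_r$; the dichotomy you raise (whether $z_0$ is far from $x$) never actually needs to be addressed, since $\rho(z,x)<(1+\eta)r$ always holds and that suffices.
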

\bp
Let all the notation be as in the present lemma. Let $g:=\blip f$ and
$$A_\da:=\left\{x\in \cX:\
g(x)\in(4\da,\infty)\right\}.$$
By the definition of g, we find that there exists a
constant $r_\da\in(0,\da)$ such that, for
any $x,\ y\in\cx$ with $\rho(x,y)\in[0,r_\da]$,
\begin{equation}\label{2-3-0}
	|g(x)-g(y)|<\da
\end{equation}
and, for any $r\in(0,r_\da],$
\begin{equation}\label{2-4-0}
	g(x)-\da < \sup_{y\in B(x,r)}\f {|f(y)-f(x)| } {r} < g(x) +\da.
\end{equation}
Let  $x\in A_\da$ and $r\in(0,r_\da]$ be fixed.
Using the first inequality in \eqref{2-4-0}
and the definition of $A_\delta$,
we conclude that there exists a point $y_{x,r}
\in B(x, r/2)$ such that
\begin{align}\label{6.30}
|f(y_{x,r})-f(x)|> \f12[g(x)-\da]r >\f 38 g(x) r.
\end{align}
Moreover, by \eqref{2-3-0} and the second inequality of \eqref{2-4-0}, we have
\[ \sup_{ z\in B(y_{x,r}, r/6)} |f(z)-f(y_{x,r})|
< \f16\left[g(y_{x,r} ) +\da\right]r
<  \f16\left[g(x ) +2\da\right]r
< \f 1{4} g(x) r.\]
From this and \eqref{6.30}, it follows that,
for any $ z\in B(y_{x,r}, r/6)\subset B(x,r)$,
\begin{align*}
	|f(z)-f(x)|\ge |f(y_{x,r}) -f(x)|-|f(z)-f(y_{x,r})|>\f18 g(x) r>
	 \f 1 {8} g(x) \rho(x,z).
\end{align*}
This implies that
\[B(y_{x,r}, r/6) \subset S(x,r)\subset B(x,r) \subset B(y_{x,r}, 2r), \]
which, together with the doubling property of the measure $\mu$, further implies that
\[ \mu(S(x,r)) \ge \mu (B(y_{x,r}, r/6) )
\gtrsim \mu (B(x,r)).\]
This finishes the proof of Lemma \ref{lem-2-33}.
\ep

Let
$p\in[1,\infty].$ We denote by the \emph{symbol} $L^p(\mu)$ the
Lebesgue $L^p$-space defined with respect to
the measure $\mu$ on ${\mathcal{X}}$.
Recall that, for any $f\in L^p(\mu)$,
$$
\|f\|_{L^p(\mu)}:=
\left[\int_{\mathcal{X}}|f(x)|^p\,d\mu(x)\right]^{1/p}
$$
with the usual modification made when $p=\infty$.

\bp[Proof of Theorem \ref{thm-3-1}]
Let all the notation be as in the present theorem. By the Tonelli theorem, we have,
for any
$\ld\in(0,\infty),$
\begin{align}
\int_{\cX} \int_{\cX} \mathbf{1}_{D_\ld} (x, y)\,d\mu(x)\,d\mu(y) &=\int_{\cX}  \mu\left( D_\ld(x)\right) \, d\mu(x),\label{2-4-b}
\end{align}
where
$$
D_\ld(x):=\left\{ y\in\cX:\ (x,y)\in D_\ld\right\}.$$
For  a sufficiently small  constant $\da\in
(0, 1)$, let
$$G_{\da}:=\left\{ x\in\cX:\ \blip f(x)\in(4\da,4\da^{-1})\right\}.$$
Let $r_\da\in (0, \da)$ be as
in Lemma \ref{lem-2-33}. For any $x\in G_\da$
and $\ld\in(0,\infty),$ let $$\mathrm{I}_\da(x,\ld):=\left\{t\in(0,r_\da]:\
[\blip f(x)]^p >  8^p\ld^p \mu(B(x,t))\right\}$$
and
\begin{align*} D_{\ld,1}(x):&=\bigcup_{t\in\mathrm{I}_\da(x,\ld)}S(x,t),
\end{align*}
where  $S(x,t)$ is as in \eqref{2-1}.
From this, it follows that, for any $x\in G_\da$
and $\ld\in(0,\infty),$  \begin{equation}\label{2-5-0}
	D_\ld(x)\supset  D_{\ld,1}(x).
\end{equation}
Recall that, for any $x\in\cx$ and
$t\in(0,\infty),$ $\mu(B(x,t))\in(0,\infty)$
and $\mu(\{x\})=0.$ Thus,  the function $
\mu(B(x,\cdot))$ is non-decreasing   on $(0,\infty)$ for any  given
$x\in\cX$, and satisfies
$$\lim_{t\in(0,\infty),t\to0}
\mu(B(x,t))=0\   \ \text{ and}\   \
\lim_{t\to\infty} \mu(B(x,t))=\mu(\cX).$$
This implies that,  for any $x\in
G_\da$ and $\ld\in[\da^{-1}[\mu(\cx)]^{-1/p},\infty)$,
there exists a unique  integer  $N(x,\ld)$
such that
\begin{equation}\label{2-3-a}
	\mu(B(x,2^{-N(x,\ld)}))< 8^{-p}\ld^{-p} [\blip f(x)] ^p \leq \mu(B(x,2^{-N(x,\ld)+1})).
\end{equation}
Clearly,  the function $N(x, \cdot)$ is
non-decreasing  on $[\da^{-1}[\mu(\cx)]^{-1/p},\infty)$ for any
given $x\in G_\da$. For any $x\in G_\da$ and $\ld\in[\da^{-1}[\mu(\cx)]^{-1/p},\infty),$
let $\va_\ld(x):=2^{-N(x,\ld)}$.
We  claim that
\begin{equation}\label{2-3}
	\lim_{\ld\to\infty} \sup_{x\in G_\da} \va_\ld(x) =0.
\end{equation}
For the moment, we take this claim for
granted, and proceed with the proof of the present
theorem. Using \eqref{2-3-a}, we conclude
that, for any $x\in G_\da,$ $\ld\in[\da^{-1}[\mu(\cx)]^{-1/p},\infty),$ and
$t\in(0,\va_\ld(x)],$
\[ [\blip f(x)] ^p > 8^p\ld^p \mu(B(x,\va_\ld(x)))\ge 8^p\ld^p \mu(B(x,t)).\]
This implies that, for any  $x\in G_\da$ and
$\ld\in[\da^{-1}[\mu(\cx)]^{-1/p},\infty),$
\begin{align}\label{2302}
D_{\ld,1} (x) \supset
\bigcup_{t\in(0,\min\{r_\da, \va_\ld(x)\}]}
S(x,t)=S(x,\min\{r_\da, \va_\ld(x)\}).
\end{align}
On the other hand, by the claim \eqref{2-3}, we conclude that there exists a constant  $$\ld_\da\in\lf[\da^{-1}[\mu(\cx)]^{-1/p},\infty\r)$$
such that, for any  $\ld\in(\ld_\da,\infty)$,
\begin{align}\label{1639}
\sup_{x\in G_\da} \va_\ld(x)<r_\da.
\end{align}
This, together with \eqref{2302}, implies that,  for any $x\in G_\da$ and $\ld\in(\ld_\da,\infty)$,
\begin{align}\label{2-5} D_{\ld,1}(x)&\supset S(x, \va_\ld(x)).
\end{align}
By \eqref{2-5-0}, \eqref{2-4-b}, and \eqref{2-5}, we find that, for any $\ld\in(\ld_\delta,\infty)$,
\begin{align}
	\int_{\cX} \int_{\cX} \mathbf{1}_{D_\ld} (x, y) \,d\mu(x) \,d\mu(y)
	\ge \int_{G_\da} \mu \left(   S\bigl(x,  \va_\ld(x)\bigr)\right)\, d\mu(x).\label{2-6-eq}
\end{align}
Using Lemma \ref{lem-2-33}, \eqref{1639}, \eqref{1133}, and
\eqref{2-3-a}, we conclude that, for any
$x\in G_\da$ and  $\ld\in(\ld_\da,\infty)$,
\begin{align*}
	\mu\left(S(x, \va_\ld(x))\right )\gtrsim \mu(B(x,\va_\ld(x)))\gtrsim
\mu(B(x,2\va_\ld(x)))\gtrsim\ld^{-p}  [\blip f(x)] ^p.
\end{align*}
From this and \eqref{2-6-eq},  it follows
that, for any $\ld\in(\ld_\da,\infty),$
\begin{align*}
\ld^p  \int_{\cX} \int_{\cX}
\mathbf{1}_{D_\ld} (x, y) \,d\mu(x) \,d\mu(y)
\gtrsim\int_{G_\da} [\blip f(x)] ^p \,d\mu(x).
\end{align*}
This implies that
\begin{align*}
	\liminf_{\ld\to\infty} \ld^p  \int_{\cX} \int_{\cX} \mathbf{1}_{D_\ld} (x, y) \,d\mu(x) \,d\mu(y)
	&\gtrsim \lim_{\da\in(0,1),\da\to0}  \int_{G_\da}[\blip f(x)] ^p  \,d\mu(x)\\
	&\sim\int_{\cX} [\blip f(x)] ^p \,d\mu(x),
\end{align*}
which completes the proof of \eqref{7-1}.

It remains to show the claim \eqref{2-3}. Notice
that the function
$\sup_{x\in G} 2^{-N(x,\cdot)}$
is non-increasing on $(0,\infty)$ and hence the
limit $$ \lim_{\ld\to \infty} \sup_{x\in G_\da}
2^{-N(x,\ld)}$$ exists and is nonnegative.
Assume to the contrary that \eqref{2-3} is not true.
Then there exists a constant $\da_0\in (0,1)$ such
that, for any $\ld\in(0,\infty),$
\begin{align}\label{9001}
	\sup_{x\in G_\da} 2^{-N(x,\ld)}>\da_0>0.
\end{align}
For any $\ld\in(0,\infty)$, let $x_\ld\in G_\da$ be such
that $ 2^{-N(x_\ld,\ld)}>\da_0$. Next, let
$x_0\in\cx$ and $M\in(1, \infty)$ be such that $\supp
f\subset B(x_0, M)$.
Thus, $\rho(x_0,x_\ld)\in[0,M).$
Suppose $L_{\mu}\in(1,\infty)$ and let $s_\mu:=\log_2 L_\mu$.
Since $\mu$ is the doubling measure, we deduce that, for
any $r\in(0,\infty)$ and $x,\ y\in \cx$,
\[ \mu(B(x,r)) \lesssim\lf[1+r^{-1} \rho(x,y)\r]^{s_\mu}\mu(B(y,r)).\]
This, combined with \eqref{1133},
the fact that
$\rho(x_0,x_\ld)\in[0,M),$
\eqref{9001}, and \eqref{2-3-a}, implies
that, for any $\ld\in(0,\infty)$,
\begin{align*}
	0&<\mu(B(x_0, M))
	\lesssim\lf[1+M^{-1}\rho(x_0,x_\ld)\r]^{s_\mu}\mu(B(x_\ld,M))\\
	&\lesssim
	\mu(B(x_\ld,\delta_0)) \lesssim\mu(B(x_\ld,2^{-N(x_\ld,\ld)}))
	\lesssim
	\ld^{-p}\|\blip f\|_{L^\infty(\mu)}^p.
\end{align*}
Let $\ld\to\infty.$ Then we have $\mu(B(x_0,M))=0.$ This
contradicts to
$\mu(B(x_0,M))\in(0,\infty)$.
Thus, the claim \eqref{2-3} holds
true. This finishes the proof of Theorem \ref{thm-3-1}.
\ep
\subsection {Proof of Theorem \ref{thm-2-3}: Upper Estimate}\label{subsection2.2}
In this subsection, we prove the stated upper
bound of Theorem \ref{thm-2-3}.
The following lemma is just \cite[Theorem 1.2]{H01}.
\begin{lemma}\label{Lemma2.3}
	Let $\mathcal{F}$ be any collection of balls with
	uniformly bounded diameter in a metric space
	$(\cx,\rho)$. Then there exists a  family
	$\mathcal{G}$ of disjoint balls in $\mathcal{F}$ such that
	$$\bigcup_{B\in\mathcal{F}}B\subset\bigcup_{B\in\mathcal{G}}5B.$$
\end{lemma}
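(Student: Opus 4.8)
The plan is to prove this by the classical \emph{$5r$-covering} argument: sort the balls of $\mathcal{F}$ into countably many subfamilies by size, and on each subfamily pick a maximal pairwise disjoint collection. I would first fix, for each $B\in\mathcal{F}$, a center $x_B\in\cx$ and a radius $r_B\in(0,\infty)$ with $B=B(x_B,r_B)$; in a general metric space a ball need not determine these, so this choice is part of the data and, consistently with the paper's convention, $5B:=B(x_B,5r_B)$. Discarding any empty balls and using the uniform bound on diameters, I may assume $R:=\sup_{B\in\mathcal{F}}r_B<\infty$, and I set, for each $j\in\nn$,
\[
\mathcal{F}_j:=\left\{B\in\mathcal{F}:\ \frac{R}{2^{j}}<r_B\le\frac{R}{2^{j-1}}\right\},
\]
so that $\mathcal{F}=\bigcup_{j\in\nn}\mathcal{F}_j$.

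Next I would select the subfamily $\mathcal{G}$ one scale at a time. Let $\mathcal{G}_1\subset\mathcal{F}_1$ be a collection of pairwise disjoint balls that is maximal with respect to inclusion; it exists by Zorn's lemma, since the union of any chain of pairwise disjoint subfamilies is again pairwise disjoint (if $\mathcal{F}$ happens to be countable, one may instead enumerate it and choose greedily, avoiding the axiom of choice). Having chosen $\mathcal{G}_1,\dots,\mathcal{G}_{j-1}$, let $\mathcal{G}_j$ be a maximal pairwise disjoint subcollection of
\[
\left\{B\in\mathcal{F}_j:\ B\cap B'=\emptyset \text{ for every } B'\in\bigcup_{i=1}^{j-1}\mathcal{G}_i\right\},
\]
and put $\mathcal{G}:=\bigcup_{j\in\nn}\mathcal{G}_j$. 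Then $\mathcal{G}$ consists of pairwise disjoint balls: two balls in the same $\mathcal{G}_j$ are disjoint by construction, and a ball of $\mathcal{G}_j$ is disjoint from every ball of $\mathcal{G}_i$ with $i<j$ by the defining property of $\mathcal{G}_j$.

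Finally I would verify the covering property. Fix $B\in\mathcal{F}$ and $j\in\nn$ with $B\in\mathcal{F}_j$. By maximality of $\mathcal{G}_j$, the ball $B$ cannot be adjoined to $\bigcup_{i=1}^{j}\mathcal{G}_i$ while preserving disjointness, so there exists $B'\in\bigcup_{i=1}^{j}\mathcal{G}_i$ with $B\cap B'\neq\emptyset$ (if $B\in\mathcal{G}_j$ one simply takes $B'=B$). Say $B'\in\mathcal{F}_i$ with $i\le j$; then $r_{B'}>R/2^{i}\ge R/2^{j}$, whereas $r_B\le R/2^{j-1}=2R/2^{j}<2r_{B'}$. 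Choosing $z\in B\cap B'$, every $x\in B$ satisfies
\[
\rho(x,x_{B'})\le\rho(x,x_B)+\rho(x_B,z)+\rho(z,x_{B'})<2r_B+r_{B'}<5r_{B'},
\]
hence $x\in 5B'$; thus $B\subset 5B'$ and therefore $\bigcup_{B\in\mathcal{F}}B\subset\bigcup_{B'\in\mathcal{G}}5B'$.

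I do not anticipate a real obstacle, since the argument is soft. The two points meriting care are the use of Zorn's lemma (or a greedy transfinite selection) to obtain a maximal disjoint subfamily at each dyadic scale, and the elementary comparison $r_B<2r_{B'}$ — available precisely because $B'$ was chosen at a scale no finer than that of $B$ — which is exactly what dictates the dilation factor $5=2\cdot2+1$. The one further subtlety, already noted, is that a ball in a metric space has no canonical center or radius, so these must be fixed beforehand and disjointness and the meaning of $5B$ interpreted with respect to that choice.
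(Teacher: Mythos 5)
The paper does not actually prove this lemma: it is stated as a black box and immediately attributed to Heinonen's \emph{Lectures on Analysis on Metric Spaces} (``The following lemma is just [Theorem 1.2, H01]''). Your argument is correct and is essentially the standard proof of the $5r$-covering lemma that the cited reference gives: dyadic decomposition by scale, a Zorn's-lemma maximal disjoint selection at each scale (respecting disjointness from coarser scales), and then the observation that any ball must meet a chosen ball of comparable or larger radius, with $r_B<2r_{B'}$ and the triangle inequality yielding the dilation constant $5$. Your two caveats are well taken and worth stating explicitly in a metric space: a ball does not determine its center or radius, so these must be fixed as data (and $5B$ interpreted relative to that choice), and passing from bounded diameters to bounded chosen radii requires the small observation that any ball $B(x,r)$ of diameter at most $D$ coincides with $B(x,\min\{r,D+1\})$, so one may shrink each radius to at most $D+1$ without changing the sets involved (only shrinking the resulting $5B$, which is harmless). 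With that noted, the proof is complete and matches the cited source's approach.
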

The upper estimate of Theorem \ref{thm-2-3}
follows directly from the following theorem.
\begin{theorem}\label{thm-2-2}
Let $p,$ $q\in[1,\infty)$ and $(\cx, \rho,
\mu)$ be a metric space of homogeneous type.
Assume that $f\in\Lip(\cx)$ satisfies a $(q,p)$-Poincar\'e inequality
with constants $C_1,$ $C_2,$
$\tau\in[1,\infty)$.	
Then there exists a positive constant $C$
such that	
\begin{equation}\label{2-9-eq}
	\sup_{\ld\in(0,\infty)} 	\ld^p \int_
	{{\mathcal{X}}}\int_{{\mathcal{X}}} \mathbf{1}
	_{D_\ld}(x, y) \,d\mu(x) \,d\mu(y) \leq C	
	\int_{{\mathcal{X}}}[\lip f(x)]^p\, d\mu(x),
\end{equation}
where $D_\ld$ is as in \eqref{keydef} for any $\ld\in(0,\infty)$, and
the constant $C$ depends only on $p,\,q$, the doubling
constant of $\mu$, and the constants $C_1,$ $C_2,$ and $\tau$.
\end{theorem}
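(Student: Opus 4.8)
The plan is to prove \eqref{2-9-eq} by replacing the method of rotation used in \cite{ref8} with a Vitali covering carried out in the product space $\cX\times\cX$, the $(q,p)$-Poincar\'e inequality playing, via a telescoping argument, the role that integration along line segments plays in $\rn$.

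First I would assume $\int_\cX[\lip f]^p\,d\mu<\fz$ (otherwise there is nothing to prove), fix $\ld\in(0,\fz)$, and abbreviate $g:=\lip f$; since $f\in\Lip(\cX)$, $g\in L^\fz(\cX)$. The first step is to convert the membership $(x,y)\in D_\ld$ into information about $g$. As $f$ satisfies the $(q,p)$-Poincar\'e inequality of Definition \ref{45622} with functionals $\{\ell_B\}_{B\in\BB}$, a telescoping (chaining) argument along the dyadic balls $B(z,2^{-j}\rho(x,y))$, $j\in\zz_+$, centred at $z=x$ and at $z=y$ — using $\ell_B(1)=1$, the boundedness \eqref{3-9-a}, the doubling condition, the oscillation estimate \eqref{2-8-eq}, and the elementary fact that $\ell_{B(z,r)}(f)\to f(z)$ as $r\to0^+$ (a consequence of \eqref{3-9-a} and the continuity of $f$) — gives
\[
|f(x)-f(y)|\ls\sum_{j\ge0}2^{-j}\rho(x,y)\lf[\f1{\mu(\tau B_j)}\int_{\tau B_j}g^p\,d\mu\r]^{\f1p},
\]
each $B_j$ being one of $B(x,2^{-j}\rho(x,y))$, $B(y,2^{-j}\rho(x,y))$. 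Since $\sum_{j\ge0}2^{-j}<\fz$, for $(x,y)\in D_\ld$ there is an index $j_{x,y}$ with $\f1{\mu(\tau B_{j_{x,y}})}\int_{\tau B_{j_{x,y}}}g^p\,d\mu\gs\ld^p V(x,y)$, so that $W_{x,y}:=\tau B_{j_{x,y}}$, a ``witness ball'' of radius $s_{x,y}\le\tau\rho(x,y)$ centred at $z_{x,y}\in\{x,y\}$, satisfies
\[
\int_{W_{x,y}}g^p\,d\mu\gs\ld^p\,V(x,y)\,\mu(W_{x,y}).
\]

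Next I would run the covering. Put $B^\ast_{x,y}:=B(x,(1+\tau)\rho(x,y))$, so that $x,y\in B^\ast_{x,y}$, $W_{x,y}\st B^\ast_{x,y}$, and $\mu(B^\ast_{x,y})\sim V(x,y)$ by doubling; moreover $g\in L^\fz$ and the witness property make $V(x,y)$, hence $\rho(x,y)$, bounded on $D_\ld$ (after first truncating to pairs with $\rho(x,y)\le R$ and letting $R\to\fz$, and treating $\mu(\cX)<\fz$ separately). Thus $D_\ld\st\bigcup_{(x,y)\in D_\ld}B^\ast_{x,y}\times B^\ast_{x,y}$ is a cover of $D_\ld$ by ``balls'' of uniformly bounded radii in $\cX\times\cX$, and the $5r$-covering lemma (Lemma \ref{Lemma2.3}) applied in $\cX\times\cX$ extracts a pairwise disjoint subfamily $\{B^\ast_k\times B^\ast_k\}_k$ with $D_\ld\st\bigcup_k(5B^\ast_k\times5B^\ast_k)$. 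Disjointness of the product balls forces the $B^\ast_k$ to be pairwise disjoint, hence so are the witness balls $W_k\st B^\ast_k$. Consequently
\[
\int_{{\mathcal{X}}}\int_{{\mathcal{X}}}\mathbf{1}_{D_\ld}(x,y)\,d\mu(x)\,d\mu(y)\le\sum_k[\mu(5B^\ast_k)]^2\ls\sum_k[\mu(B^\ast_k)]^2,
\]
while the witness property and the disjointness of the $W_k$ give $\sum_k V(x_k,y_k)\,\mu(W_k)\ls\ld^{-p}\sum_k\int_{W_k}g^p\,d\mu\le\ld^{-p}\int_\cX g^p\,d\mu$.

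The crux is to pass from $\sum_k V(x_k,y_k)\mu(W_k)$ to $\sum_k[\mu(B^\ast_k)]^2\sim\sum_k[V(x_k,y_k)]^2$: a priori $W_{x,y}$ may live at a scale far below $\rho(x,y)$, so that $\mu(W_k)\ll\mu(B^\ast_k)$ and the two sums are incomparable. This is where the skillful decomposition enters: I would split $D_\ld$ according to the dyadic size of $\mu(W_{x,y})/\mu(B^\ast_{x,y})$ — equivalently, the scale of $W_{x,y}$ relative to $\rho(x,y)$ — perform the covering above on each piece, and use the quantitative doubling inequality \eqref{1133}, which forces a small witness ball attaining the average $\ld^pV(x,y)$ to carry a correspondingly large excess of $g^p$-mass, so that the losses coming from the measures of the enclosing product balls are absorbed by a convergent geometric series in the decomposition parameter, leaving a bound $C\ld^{-p}\int_\cX g^p\,d\mu$. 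Taking the supremum over $\ld\in(0,\fz)$ gives \eqref{2-9-eq}. I expect this aggregation over scales to be the main technical obstacle; it is the price one pays for working without the polar-coordinate structure of $\rn$.
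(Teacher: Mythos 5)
Your overall plan is the right one and its first half matches the paper closely: telescoping via the Poincar\'e functionals to produce a witness ball, followed by a Vitali-type covering in the product $\cX\times\cX$. But the covering step, as you describe it, has a genuine gap, and your own ``crux'' paragraph does not actually close it.

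The problem is the choice of \emph{isotropic} product balls $B^\ast_{x,y}\times B^\ast_{x,y}$. With the $5r$-covering lemma applied to these, the disjoint subfamily has disjoint \emph{big} balls $B^\ast_k$, and the measure of the cover is controlled by $\sum_k[\mu(B^\ast_k)]^2\sim\sum_k[V(x_k,y_k)]^2$. The witness property, even in its sharpened form with the weighted pigeonhole factor $2^{j/2}$ inserted (which you do not state, but which is necessary for geometric summability in $j$), only controls the mixed product $V(x_k,y_k)\,\mu(W_k)$: one gets $V(x_k,y_k)\,\mu(W_k)\ls 2^{-j/2}\ld^{-p}\int_{W_k}g^p\,d\mu$. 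Passing from there to $[V(x_k,y_k)]^2$ costs a factor $V(x_k,y_k)/\mu(W_k)$. If the witness ball lives at scale $2^{-j}\tau\rho(x_k,y_k)$, quantitative doubling gives $V(x_k,y_k)/\mu(W_k)\ls 2^{js_\mu}$ with $s_\mu=\log_2 L_\mu$ — a \emph{growing} factor. It cannot be traded against the $2^{-j/2}$ gain unless $s_\mu<1/2$, which is false in general. Your proposed dyadic splitting according to $\mu(W_{x,y})/\mu(B^\ast_{x,y})$ merely reorganizes the sum over $j$; it does not remove the exponential loss, because doubling bounds $V/\mu(W)$ from \emph{above} by a growing quantity, not from below by a decaying one.

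The device the paper uses to avoid this loss entirely — and which your proposal is missing — is an \emph{anisotropic} product metric. For each $j\in\zz_+$ one puts
$\rho_j((x,y),(u,v)):=\max\{2^j\tau^{-1}\rho(x,u),\ \tau^{-1}\rho(y,v)\}$
on $\cX\times\cX$, so that the ``balls'' for $\rho_j$ centred at the diagonal are exactly the rectangles $(2^{-j}\tau B)\times(\tau B)$. Lemma \ref{Lemma2.3} holds in any metric space, hence applies with $\rho_j$, and the extracted disjoint family consists of rectangles of the same anisotropic shape. The double integral over the cover is then bounded by $\sum_k\mu(5\cdot 2^{-j}\tau B_k)\mu(5\tau B_k)\ls\sum_k\mu(2^{-j}\tau B_k)\mu(B_k)$, which is precisely the mixed product the witness property controls, with no $2^{js_\mu}$ penalty; the $2^{-j/2}$ gain from the weighted pigeonhole then furnishes the convergent geometric series in $j$, and disjointness of the small balls $2^{-j}\tau B_k$ over $k$ closes the estimate. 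Without this anisotropic covering your argument does not go through.
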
	

Now, we prove Theorem \ref{thm-2-2}. To this end,
  we need the following lemma.
\begin{lemma}\label{lem-2-3}
Let $\mathcal{X}$, $f$, $C_1$, $C_2$, $p$,  and $q$ be as in Theorem
\ref{thm-2-2}. Let $\tau$ and $\{ \ell_B \}_{B\in\BB}$
be as in Definition \ref{45622}. Assume that $B=B(x,r)$ is a ball with $x\in {\mathcal{X}}$
and $r\in(0,\infty),$
and $B_1\in \BB$ is
a ball such that $x\in B_1\subset B\subset \al B_1$ with $\al\in[1,\infty)$. Then
there exists a positive constant $C$ such that
\begin{equation}\label{5-1b}
	|f(x)-\ell_{B_1}(f)|^p\leq Cr^p\sum_{j=0}^\infty2^{-j}
	\av_{2^{-j}\tau B}[\lip f(z)]^p\,d\mu(z),
\end{equation}
where the positive constant $C$ depends only on $p,\,q,\,\al$,
the doubling constant of $\mu$, and the constants $C_1$ and $C_2$.	
\end{lemma}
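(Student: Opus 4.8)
The plan is to iterate the Poincar\'e inequality along a chain of dyadically shrinking balls centered at $x$, using the telescoping identity $f(x)-\ell_{B_1}(f)=\sum_{j\ge 0}[\ell_{B_{j+1}}(f)-\ell_{B_j}(f)]$ together with the continuity of $f$ at $x$. First I would fix the chain of balls. Set $B_0:=B_1$ and, for each $j\in\zz_+$, let $B_{j+1}:=B(x,2^{-j-1}r_{B_1})$ or, more conveniently, let $\wt B_j:=B(x,2^{-j}r)$ so that $B_1\subset\wt B_0=B$ and $\wt B_{j+1}\subset\wt B_j$ with $\mu(\wt B_j)\sim\mu(\wt B_{j+1})$ by the doubling property (the comparability constant depending only on $L_\mu$). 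Since $x\in B_1\subset B$ and $B\subset\al B_1$, all these balls are comparable: $\mu(\wt B_j)\sim\mu(2^{-j}B_1)$ with constants depending on $\al$ and $L_\mu$.

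Next I would estimate the increment $|\ell_{\wt B_{j+1}}(f)-\ell_{\wt B_j}(f)|$. Using \eqref{3-9-a} applied to the function $f-\ell_{\wt B_j}(f)\cdot 1$ on the ball $\wt B_{j+1}$ (and $\ell_{\wt B_{j+1}}(1)=1$), one gets
\begin{align*}
|\ell_{\wt B_{j+1}}(f)-\ell_{\wt B_j}(f)|
&=\left|\ell_{\wt B_{j+1}}\bigl(f-\ell_{\wt B_j}(f)\bigr)\right|
\le C_1\left[\av_{\wt B_{j+1}}|f(z)-\ell_{\wt B_j}(f)|^q\,d\mu(z)\right]^{1/q}\\
&\lesssim\left[\av_{\wt B_j}|f(z)-\ell_{\wt B_j}(f)|^q\,d\mu(z)\right]^{1/q}
\le C_2\,r_{\wt B_j}\left[\av_{\tau\wt B_j}[\lip f(z)]^p\,d\mu(z)\right]^{1/p},
\end{align*}
where in the third step I enlarged the domain of integration from $\wt B_{j+1}$ to $\wt B_j$ at the cost of the doubling constant, and in the last step I applied \eqref{2-8-eq} on $\wt B_j$. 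Since $r_{\wt B_j}=2^{-j}r$ and $\tau\wt B_j=2^{-j}\tau B$, this reads $|\ell_{\wt B_{j+1}}(f)-\ell_{\wt B_j}(f)|\lesssim 2^{-j}r\bigl[\av_{2^{-j}\tau B}[\lip f(z)]^p\,d\mu(z)\bigr]^{1/p}$. The analogous bound for $|\ell_{\wt B_0}(f)-\ell_{B_1}(f)|$ (both balls comparable, both containing $x$) is handled the same way, absorbing the $\al$-dependence. One also needs $\ell_{\wt B_j}(f)\to f(x)$ as $j\to\infty$: this follows because $f\in\Lip(\cx)$ is continuous at $x$, so $\av_{\wt B_j}|f(z)-f(x)|^q\,d\mu(z)\to 0$, and then \eqref{3-9-a} (applied to $f-f(x)$) forces $|\ell_{\wt B_j}(f)-f(x)|\to 0$.

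Finally I would sum the telescoping series and apply the power-mean (Minkowski/triangle) inequality in $\ell^p$. Writing $a_j:=\bigl[\av_{2^{-j}\tau B}[\lip f]^p\,d\mu\bigr]^{1/p}$, we have $|f(x)-\ell_{B_1}(f)|\lesssim r\sum_{j\ge0}2^{-j}a_j$. To reach the stated form with a clean $\sum 2^{-j}$, split the weight $2^{-j}=2^{-j/p'}\cdot 2^{-j/p}$ and apply H\"older's inequality in $j$:
$$\Bigl(\sum_{j\ge0}2^{-j}a_j\Bigr)^p\le\Bigl(\sum_{j\ge0}2^{-j}\Bigr)^{p/p'}\sum_{j\ge0}2^{-j}a_j^p
\lesssim\sum_{j\ge0}2^{-j}\av_{2^{-j}\tau B}[\lip f(z)]^p\,d\mu(z),$$
which is exactly \eqref{5-1b} after raising the previous display to the $p$-th power. (When $p=1$ this step is trivial.) Tracking constants, everything depends only on $p$, $q$, $\al$, $L_\mu$, $C_1$, and $C_2$, as claimed.

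The main obstacle I anticipate is purely bookkeeping rather than conceptual: one must be careful that the balls $\wt B_j$ genuinely all contain $x$ (they do, being centered at $x$) so that \eqref{3-9-a} can be legitimately iterated, and that the doubling comparisons $\mu(\wt B_j)\sim\mu(\wt B_{j+1})\sim\mu(2^{-j}B_1)$ are uniform in $j$ — which they are, since $\wt B_{j+1}=\tfrac12\wt B_j$. A minor subtlety is justifying $\ell_{\wt B_j}(f)\to f(x)$; this is where the hypothesis $f\in\Lip(\cx)$ (hence continuity at $x$) and the normalization $\ell_B(1)=1$ are both essential. No new idea beyond the standard "chaining argument for Poincar\'e inequalities" is needed.
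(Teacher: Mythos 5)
Your proposal is correct and follows essentially the same argument as the paper: telescoping $\ell_{2^{-j}B}(f)$ along the dyadic chain centered at $x$, showing $\ell_{2^{-j}B}(f)\to f(x)$ via \eqref{3-9-a} and continuity, bounding each increment by \eqref{3-9-a} and \eqref{2-8-eq}, controlling $|\ell_B(f)-\ell_{B_1}(f)|$ separately by comparability of $B$ and $B_1$, and finally applying H\"older in $j$ to pass from $\bigl(\sum_j 2^{-j}a_j\bigr)^p$ to $\sum_j 2^{-j}a_j^p$. The only cosmetic difference is that you fold the $B\leftrightarrow B_1$ comparison into the chain as a zeroth step, whereas the paper treats it as a separate additive term; the estimates used are the same.
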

\begin{proof}
Let all the notation be as in the present lemma. Let $x\in {\mathcal{X}},$
$r\in(0,\infty),$ and $B:=B(x,r).$
By \eqref{3-9-a} and $f\in BC(\cx)$, we have
\begin{align*}
|\ell_{2^{-j}B}(f)-f(x)|&=|\ell_{2^{-j}B}(f-f(x))|
\lesssim\left[\frac{1}{\mu(2^{-j}B)}\int_{2^{-j}B}|f(y)-f(x)|
^q\,dy\right]^{\frac{1}{q}}\\
&\lesssim\sup_{y\in 2^{-j}B}|f(y)-f(x)|.
\end{align*}
This implies that
$$f(x)=\lim_{j\to\infty}\ell_{2^{-j }B}f.$$
From this, \eqref{3-9-a}, \eqref{1133}, and \eqref{2-8-eq},
it follows that
\begin{align*}
|f(x)-\ell _{B}(f)|
&=\lim_{j\to\infty}|\ell_{2^{-j}B}f(x)-\ell_{B}f(x)|\lesssim\sum_{j=0}^\infty|\ell_{2^{-j-1}
B}(f)-\ell_{2^{-j}B}(f)|\\
&\sim\sum_{j=0}^\infty\left|\ell_{2^{-j-1} B}\lf(f-\ell
_{2^{-j}B}(f)\r)\right|\\
&\lesssim\sum_{j=0}^\infty\left[\av_{2^{-j-1}B}
|f(y)-\ell_{2^{-j}B}(f)|^q\, d\mu(y)\right]^{\f1q}\\
&\lesssim \sum_{j=0}^\infty
\left[\av_{2^{-j}B}|f(y)-\ell_{2^{-j} B}(f)|
^q\, d\mu(y)\right]^{\f1q}\\
&\lesssim\sum_{j=0}^\infty2^{-j}
r\left\{\av_{2^{-j}\tau B} [\lip f(y)]^p\, d\mu(y)\right\}^{\f1p}.
\end{align*}
Using this and the H\"older inequality, we
find that
\begin{align}\label{3-13}
|f(x)-\ell_B(f)|^p &\lesssim r^p\sum_{j=0}^\infty2^{-j}
\av_{2^{-j}\tau B}[\lip f(z)]^p\,d\mu(z).
\end{align}
By \eqref{3-9-a}, \eqref{1133}, and
\eqref{2-8-eq}, we conclude that
\begin{align*}
|\ell_B(f)-\ell_{B_1}(f)|^p
&\sim|\ell_{B_1}(f-\ell_B(f))|^p\lesssim\left[\av_{B_1}
|f(z)-\ell_B(f)|^q\, d\mu(z)\right]^{p/q}\\
&\lesssim\left[\av_{B}|f(z)-\ell_B(f)|
^q\, d\mu(z)\right]^{p/q}\lesssim r^p
\av_{\tau B} [\lip f(z)]^p\, d\mu(z).
\end{align*}
This, combined with \eqref{3-13}, implies \eqref{5-1b},
which completes the proof of Lemma \ref{lem-2-3}.
\end{proof}
\bp [{Proof of Theorem \ref{thm-2-2}}]
Let all the notation be as in the present theorem. For any $(x,y)\in{\mathcal{X}}\times{\mathcal{X}}$, let
$z_{x,y}$ be an arbitrarily given point of
$B(x, \rho(x,y))$ and
$$ B_{x,y}:=B(z_{x,y}, 2\rho(x,y)).$$
Clearly, for any $x,$ $y\in\cx,$
\[[\rho(x,y)]^{p} V(x,y)\sim\mu(B_{x,y})r_{B_{x,y}}^p.\]
From this and the fact that, for any $x,$ $y\in\mathcal{X},$
\begin{align*}
|f(x)-f(y)|\leq|f(x)-\ell_{B_{x,y}}(f)|+|
f(y)-\ell_{B_{x,y}}(f)|,
\end{align*}
it follows that, for any $\lambda\in(0,\infty),$
$$ D_\ld \subset D_\ld^1\cup D_\ld^2,$$
where
$$D_\ld^1:=\lf\{(x,y)\in {\mathcal{X}}\times
{\mathcal{X}}:\ |f(x)-\ell_{B_{x,y}} (f)|
^p>c\ld^p \mu(B_{x,y})r_{B_{x,y}}^p\r\}$$
and
$$D_\ld^2:=\lf\{ (x,y)\in {\mathcal{X}}\times
{\mathcal{X}}:\ |f(y)-\ell_{B_{x,y}} (f)|
^p>c\ld^p \mu(B_{x,y}) r_{B_{x,y}}^p \r\}$$
with the positive constant $c$ depending only on
$\mu.$ Let $x_0\in {\mathcal{X}}$ be a fixed point.
For any $N\in\NN$, let
$${\mathcal{X}}_N:=B(x_0, N)$$
and, for any $i\in\{1,2\}$,
$$D_{\ld, N}^i:=D_{\ld}^i\cap({\mathcal{X}}_N
\times{\mathcal{X}}_N).$$
By symmetry and the monotone convergence theorem,
we find that, to prove \eqref{2-9-eq}, it suffices
to show that, for any $N\in\NN$ and $\ld\in(0,\infty),$
\begin{equation}\label{3-14-a}
\int_{{\mathcal{X}}\times {\mathcal{X}}}
\mathbf{1}{_{D_{\ld,N}^1}}(x, y) \,d\mu(x)\, d\mu(y)
\lesssim\f 1 {\ld^p}\int_{{\mathcal{X}}}[\lip f(z)]^p\, d\mu(z),
\end{equation}
where the implicit positive constant depends only on
$p,$ $q$, the doubling constant of $\mu$, and the
constants $C_1,$ $C_2,$ and $\tau$ as in the present theorem.

For any $(x,y)\in D_{\ld,N}^1$, let $$\wt B_{x,y}
:=B(x, 3\rho(x,y)).$$ Clearly,
$$B_{x,y}\subset\wt B_{x,y}\subset 3 B_{x,y}
\ \text{and}\ r_{\wt B_{x,y}}:=3\rho(x,y)\leq 6N.$$
By this, \eqref{5-1b}, and \eqref{1133}, we
have, for any $\lambda\in(0,\infty)$ and
$(x, y)\in D_{\ld,N}^1,$
\begin{align*}
&\sum_{j=0}^\infty 2^{-j}\av_{2^{-j}\tau \wt B_{x,y}}
[\lip f(z)]^p\,d\mu(z)\ge C^{-1} r_{\wt B_{x,y}}^{-p}|f(x)-
\ell_{B_{x,y}}(f)|^p>c_1\ld^p \mu(\wt B_{x,y}),
\end{align*}
where $C$ is as in \eqref{5-1b} and $c_1
:=C^{-1}c(\frac{1}{2})^pL_{\mu}^{-2}.$
This implies that, for any $\lambda\in(0,\infty)$
and $(x,y)\in D_{\ld,N}^1,$ there
exists a $j\in\zz_+ $ such that
\begin{equation}\label{maoxiandao}
\av_{2^{-j}\tau\wt B_{x,y}}[\lip f(z)]^p\,d\mu(z)
>c_22^{j/2}\ld^p\mu(\wt B_{x,y}),
\end{equation}
where $c_2:=(1-2^{-\frac{1}{2}})c_1.$
For any $\ld\in(0,\infty)$ and $j\in\zz_+$,
we denote by the \emph{symbol} $\mathcal
{B}_j^N(\ld)$ the collection of all balls $B
\subset {\mathcal{X}}$ with radius $\leq 6N$ such that
\begin{equation}\label{5-7b}
\av_{2^{-j}\tau B}[\lip f(z)]^p\,d\mu(z)>c_22^{j/2}\ld^p\mu(B).
\end{equation}
For any $\ld\in(0,\infty)$ and $j\in\zz_+$, let
\begin{align}\label{dwtl}
	D_j^N(\ld)&:=\lf\{ (x, y)\in{\mathcal{X}}
	_N\times {\mathcal{X}}_N:\ \exists\,
	B\in\mathcal{B}_j^N(\ld)\ \text{such that}
	\ x\in 2^{-j}\tau B\ \text{and}\ y\in \tau B\r\}.
\end{align}
For any $\ld\in(0,\infty)$ and $(x,y)\in D_{\ld,N}^1$,
by \eqref{maoxiandao}, we conclude that there exists
a $j\in\zz_+$ such that $\wt B_{x,y}\in
\mathcal{B}^N_j(\ld)$ and $(x, y)\in (2^{-j}\tau
\wt B_{x,y})\times (\tau\wt B_{x,y})$.
This implies that, for any $\lambda\in(0,\infty),$
\begin{align*}
D_{\ld,N}^1 \subset \bigcup_{j=0}^\infty D_j^N(\ld).
\end{align*}
Thus, to prove \eqref{3-14-a}, it suffices to show that,
for any $N\in\NN$ and $\ld\in(0,\infty),$
\begin{equation}\label{5-8b}
\sum_{j=0}^\infty \int_{{\mathcal{X}}_N}\int
_{{\mathcal{X}}_N} \mathbf{1}_{D_j^N(\ld)} (x,y)
\,d\mu(x)\, d\mu(y) \lesssim \f 1 {\ld^p} \int
_{{\mathcal{X}}} [\lip f(z)]^p\, d\mu(z),
\end{equation}
where the implicit positive constant depends only on
$p,$ $q$, the doubling constant of $\mu$, and the
constants $C_1,$ $ C_2,$ and $\tau$ as in the present theorem.

By \eqref{dwtl}, we find that, for any $j\in\zz_+$
and $\ld\in(0,\infty),$
\begin{equation}\label{3-17b}
D_j^N(\ld)\subset\bigcup_{B\in \mathcal{B}_j^N(\ld)}
\lf( (2^{- j} \tau B )\times (\tau B)\r).
\end{equation}
Now, consider the following metric $\rho_j$ on ${\mathcal{X}}
\times {\mathcal{X}}$, defined by setting, for any
$(x,y),\ (u,v)\in {\mathcal{X}}\times {\mathcal{X}},$
$$ \rho_j\lf( (x,y), (u,v)\r):=\max \lf\{ 2^j\tau^{-1}
\rho(x,u), \ \tau^{-1}\rho(y,v)\r\}.$$
Notice that, for any ball $B:=B(x_0, r)\subset {\mathcal{X}}$ with $r\in(0,\infty)$,
the set $(2^{-j} \tau B)\times(\tau B)$ is a ball with
center $(x_0, x_0)\in {\mathcal{X}}\times {\mathcal{X}}$
and radius $ r$ in the metric space $({\mathcal{X}}
\times {\mathcal{X}},\rho_j)$. Thus, $\{(2^{-j} \tau B)
\times (\tau B)\}_{B\in \mathcal{B}_j^N}$ is a family
of balls in the metric space $({\mathcal{X}}\times
{\mathcal{X}},\rho_j)$ with radius not more than
$ 6 N$.   Applying Lemma \ref{Lemma2.3}
to the collection of the balls,
$$\lf\{ (2^{-j}\tau B)\times (\tau B):\
B\in\mathcal{B}^N_j(\ld)\r\},$$
in the metric space $({\mathcal{X}}\times
{\mathcal{X}},\rho_j)$, we conclude that there
exists a family $\mathcal{G}_j$ of disjoint balls in $\mathcal{B}_j^N(\ld)$ such that
\begin{equation}\label{5-10b}
\bigcup_{B\in\mathcal{B}_j^N(\ld)} \lf((2^{-j}
\tau B)\times \tau B \r) \subset \bigcup_{B\in\mathcal{G}_j} (5\cdot 2^{-j}
\tau B)\times (5\tau B),
\end{equation}
and the balls $\{(2^{-j}\tau B)\times \tau B\}_{B\in\mathcal{G}_j}$ are pairwise disjoint, which
in turn implies that the balls
$$\lf\{2^{-j}\tau B\r\}_{B\in\mathcal{G}_j}$$
are pairwise disjoint.
By \cite[p.\,67]{CW71} and \cite[Lemma 2.5]{H2010}, we find that any disjoint
collection of balls in $\cx$ is at most countable. Thus, the sets $\{2^{-j}\tau B\}_{B\in\mathcal{G}_j}$
is at most countable and we rewrite $\{2^{-j}\tau B\}_{B\in\mathcal{G}_j}$
by  $\{2^{-j}\tau B_{j,k}\}_{k\in\nn}$. From this and \eqref{5-10b},  it follows that,
for any $\ld\in(0,\infty),$
\begin{align*}
&\sum_{j=0}^\infty \int_{{\mathcal{X}}_N\times
{\mathcal{X}}_N} \mathbf{1}_{D_j^N(\ld)}(x,y)\,d\mu(x)\, d\mu(y)\\
&\quad\lesssim\sum_{j=0}^\infty \sum_{k} \mu(5\tau B_{j,k})
\mu(5\cdot2^{-j}\tau B_{j,k})\lesssim\sum_{j=0}^\infty \sum_{k} \mu(B_{j,k})
\mu(2^{-j}\tau B_{j,k}) \\
&\quad\lesssim \ld^{-p}\sum_{j=0}^\infty 2^{-j/2}
\sum_{k} \int_{2^{-j}\tau B_{j,k}} [\lip f(z)]^p \, d\mu(z)\\
&\quad\lesssim \ld^{-p} \sum_{j=0}^\infty 2^{-j/2}
\int_{{\mathcal{X}}} [\lip f(z)]^p \, d\mu(z)\lesssim
\ld^{-p } \int_{{\mathcal{X}}} [\lip f(z)]^p\, d\mu(z),
\end{align*}
where we used \eqref{3-17b}, \eqref{5-10b}, and
\eqref{1133} in the first
step, \eqref{5-7b} and the fact that $B_{j,k}
\in\mathcal{B}_j^N(\ld)$ in the second step,
the fact that the balls $\{2^{-j}\tau B_{j,k}\}_{k\in\mathbb{N}}$
 are pairwise disjoint in
the third step, and the implicit positive constant
depends only on $p,$ $q$, the doubling constant of
$\mu$, and the constants $C_1,$ $C_2,$ and $\tau$
as in the present theorem.
This proves \eqref{5-8b}, which completes the
proof of Theorem \ref{thm-2-2}.
\ep
\section{ Fractional Sobolev
	and Gagliardo--Nirenberg Type Inequalities}
\label{section2}
As an application of Theorem \ref{thm-2-3},
we obtain the following fractional Sobolev
and Gagliardo--Nirenberg type inequalities in
any metric
measure space of homogeneous type.
\begin{corollary}\label{corollart6.5}
Let $(\cX, \rho, \mu)$ be a  metric
measure space of homogeneous type and $p,\,q\in[1,\infty).$ Assume that $f\in \cC_{\mathrm{c}}^\ast(\cx)$
satisfies a $(q,1)$-Poincar\'e inequality
with constants $C_1,$ $ C_2,$ $ \tau\in[1,\infty)$
Then there
exists a
positive constant $C$, depending on $C_1,\,C_2,\,p,\,q,$ and $\tau,$ such that
\begin{align}\label{hehe1}
&\sup_{\ld\in(0,\infty)}\ld\left[
\int_{\cx}\mu\lf(\lf\{x\in\cx:\ |f(x)-f(y)|>\ld[\rho(x,y)]^{\f1p}[V(x,y)]^{\f1p}\r\}\r)\,d\mu(y)\right]^{\f1p}\\
&\quad\leq C\|f\|_{L^\infty(\mu)}^{1-\f1p}\|\blip f\|_{L^1(\mu)}^{\f1p},\noz
\end{align}
where $V(x,y):=\mu(B(x,\rho(x,y)))$ for any $x,\ y\in\cx$.
\end{corollary}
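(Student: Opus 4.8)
The plan is to deduce \eqref{hehe1} from the upper estimate in Theorem \ref{thm-2-3} (equivalently, Theorem \ref{thm-2-2}) specialized to the exponent $p=1$, combined with the elementary inequality $a^{p}\le(2\|f\|_{L^\infty(\mu)})^{p-1}a$, valid for every $a\in[0,2\|f\|_{L^\infty(\mu)}]$. We may assume $f\not\equiv 0$, so that $0<\|f\|_{L^\infty(\mu)}<\infty$ (as $f$ is continuous with compact support) and $\|\blip f\|_{L^1(\mu)}<\infty$ (as $\blip f\in C_{\mathrm{c}}(\cx)$); otherwise \eqref{hehe1} is trivial.

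First I would fix $\ld\in(0,\infty)$ and, for each $y\in\cx$, set $E_{\ld}(y):=\{x\in\cx:\ |f(x)-f(y)|>\ld[\rho(x,y)]^{1/p}[V(x,y)]^{1/p}\}$. The key pointwise step is this: if $x\in E_{\ld}(y)$, then $\ld^{p}\rho(x,y)V(x,y)<|f(x)-f(y)|^{p}\le(2\|f\|_{L^\infty(\mu)})^{p-1}|f(x)-f(y)|$, whence $|f(x)-f(y)|>\wt\ld\,\rho(x,y)V(x,y)$, where $\wt\ld:=\ld^{p}(2\|f\|_{L^\infty(\mu)})^{-(p-1)}$. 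Therefore $E_{\ld}(y)\subset D^{\sharp}_{\wt\ld}(y)$, where $D^{\sharp}_{\wt\ld}:=\{(x,y)\in\cx\times\cx:\ |f(x)-f(y)|>\wt\ld\,\rho(x,y)V(x,y)\}$ is exactly the set \eqref{keydef}, at level $\wt\ld$, with $[V(x,y)]^{1/p}$ replaced by $V(x,y)$, and $D^{\sharp}_{\wt\ld}(y):=\{x\in\cx:\ (x,y)\in D^{\sharp}_{\wt\ld}\}$.

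Next I would integrate this inclusion in $y$ and use the Tonelli theorem to get $\int_{\cx}\mu(E_{\ld}(y))\,d\mu(y)\le\int_{\cx}\int_{\cx}\mathbf{1}_{D^{\sharp}_{\wt\ld}}(x,y)\,d\mu(x)\,d\mu(y)$. Since $f\in\cC_{\mathrm{c}}^{\ast}(\cx)$ satisfies a $(q,1)$-Poincar\'e inequality with the prescribed constants $C_1,C_2,\tau$, the upper estimate in Theorem \ref{thm-2-3} applied with $p=1$ (that is, Theorem \ref{thm-2-2} with $p=1$) shows, for the particular level $\wt\ld$, that $\int_{\cx}\int_{\cx}\mathbf{1}_{D^{\sharp}_{\wt\ld}}(x,y)\,d\mu(x)\,d\mu(y)\le C\wt\ld^{-1}\|\blip f\|_{L^1(\mu)}$. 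Substituting the value of $\wt\ld$ yields $\ld^{p}\int_{\cx}\mu(E_{\ld}(y))\,d\mu(y)\le C\,(2\|f\|_{L^\infty(\mu)})^{p-1}\|\blip f\|_{L^1(\mu)}$; taking $p$-th roots and then the supremum over $\ld\in(0,\infty)$ produces \eqref{hehe1}, the constant depending only on $p,q,C_1,C_2,\tau$ (and the doubling constant of $\mu$).

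I do not anticipate a genuine obstacle here: the entire content is the reduction to the ``$p=1$'' instance of Theorem \ref{thm-2-3}, which is legitimate because that theorem (and its upper half, Theorem \ref{thm-2-2}) requires only that the given function satisfy a $(q,1)$-Poincar\'e inequality --- precisely the hypothesis at hand. The only points that need some care are the bookkeeping of the rescaled level $\wt\ld$ and the disposal of the degenerate cases through the boundedness of $f$.
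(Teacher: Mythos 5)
Your proposal is correct and follows essentially the same route as the paper's proof: the paper also passes from $E_f(\ld,p,1/p)$ to $E_f(\ld^p/(2\|f\|_{L^\infty(\mu)})^{p-1},1,1)$ via the bound $|f(x)-f(y)|^p\le(2\|f\|_{L^\infty(\mu)})^{p-1}|f(x)-f(y)|$ and then invokes the $p=1$ instance of Theorem~\ref{thm-2-3}. Your version is only slightly more explicit in isolating the relevant upper estimate, Theorem~\ref{thm-2-2}, and in disposing of the degenerate case $f\equiv0$.
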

\begin{proof}
Let all the notation be as in the present corollary. For any $\ld, \,r, \,s \in(0,\infty),$  let
\begin{align}\label{2104}
E_f(\ld,r,s):=\left\{(x,y)\in\cx\times\cx:\
|f(x)-f(y)|>\ld[\rho(x,y)]^s[V(x,y)]^{\f1r}\right\}.
\end{align}
Since, for any $\ld\in(0,\infty),$
$$E_f(\ld,p,1/p)\subset E_f\left(\frac{\ld^p}{[2\|f\|_{L^\infty(\mu)}]^{p-1}},1,1\right),$$
from Theorem \ref{thm-2-3}, we deduce that
\begin{align*}
&\sup_{\ld\in(0,\infty)}\ld\left[
\int_{\cx}\mu\lf(\lf\{x\in\cx:\ |f(x)-f(y)|>\ld[\rho(x,y)]^{\f1p}[V(x,y)]^{\f1p}\r\}\r)\,d\mu(y)\right]^{\f1p}\\
&\quad\lesssim\sup_{\ld\in(0,\infty)}\ld\left[
\int_{\cx}\mu\lf(\lf\{x\in\cx:\ |f(x)-f(y)|>\ld^p/[2\|f\|_{L^\infty(\mu)}]^{p-1}\rho(x,y)V(x,y)\r\}\r)\,d\mu(y)\right]^{\f1p}\\
&\quad\sim\|f\|_{L^\infty(\mu)}^{1-\f1p}\sup_{\ld\in(0,\infty)}\left[\ld
\int_{\cx}\mu(\{x\in\cx:\ |f(x)-f(y)|>\ld\rho(x,y)V(x,y)\})\,d\mu(y)\right]^{\f1p}\\
&\quad\lesssim
\|f\|_{L^\infty(\mu)}^{1-\f1p}\|\blip
f\|_{L^1(\mu)}^{\f1p}.
\end{align*}
This finishes the proof of Corollary \ref{corollart6.5}.
\end{proof}
\begin{remark}
In the case when $\cx:=\rn,$ $d\mu(x):=dx$, and $\rho(x,y):=|x-y|$ for
any $x,\ y\in\rn,$  Corollary \ref{corollart6.5}
is just \cite[Corollary 5.1]{ref8}.
\end{remark}
\begin{corollary}\label{corollary6.6}
Let $s_1 \in(0,1)$, $p_1\in(1,\infty),$ and
$\ta\in (0,1)$. Let $s\in (s_1, 1)$ and
$p\in (1, p_1)$ satisfy $s:=(1-\theta) s_1+\theta$
and $ \frac{1}{p}:=\frac{1-\theta}{p_1}+\theta$.
Let $(\cX, \rho, \mu)$ be a  metric
measure space of homogeneous type. Assume that $f\in \cC_{\mathrm{c}}^\ast(\cx)$ satisfies a $(q,1)$-Poincar\'e inequality
with constants $C_1,$ $ C_2,$ $ \tau\in[1,\infty)$.
Then there exists a positive constant $C,$
depending on $C_1,\,C_2,\,p,\,q,$ and $\tau,$  such that
\begin{align}\label{hehe2}
&\sup_{\ld\in(0,\infty)}\ld\left[
\int_\cx\mu\lf(\lf\{x\in\cx:\ |f(x)-f(y)|>\ld[\rho(x,y)]^{s}[V(x,y)]^{\f1p}\r\}\r)\,d\mu(y)\right]^{\f1p}\\\noz
&\quad\leq C\|\blip
f\|_{L^1(\mu)}^{\theta}\left[\int_{\cx}
\int_{\cx}\frac{|f(x)-f(y)|^{p_1}}{[\rho(x,y)]^{s
_1p_1}V(x,y)}\,d\mu(x)\,d
\mu(y)\right]^{\frac{1-\theta}{p_1}},
\end{align}
where $V(x,y)=\mu(B(x,\rho(x,y)))$ for any $x,y\in\cx$.
\end{corollary}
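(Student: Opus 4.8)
The plan is to derive \eqref{hehe2} from Theorem \ref{thm-2-3} by a level-set (real-interpolation) splitting between a ``Lipschitz endpoint'' and a ``fractional endpoint'', in the spirit of the proof of \cite[Corollary 5.2]{ref8}. Set
\[
I:=\|\blip f\|_{L^1(\mu)}\quad\text{and}\quad J:=\int_{\cx}\int_{\cx}\frac{|f(x)-f(y)|^{p_1}}{[\rho(x,y)]^{s_1p_1}V(x,y)}\,d\mu(x)\,d\mu(y),
\]
and, for $\ld,r,t\in(0,\infty)$, recall the set $E_f(\ld,r,t)$ from \eqref{2104}; by the Tonelli theorem the left-hand side of \eqref{hehe2} equals $\sup_{\ld\in(0,\infty)}\ld\,[(\mu\times\mu)(E_f(\ld,p,s))]^{1/p}$, where $\mu\times\mu$ denotes the product measure on $\cx\times\cx$. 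The numerology $s=(1-\theta)s_1+\theta$ and $\f1p=\f{1-\theta}{p_1}+\theta$ is exactly what makes the pointwise factorization
\[
[\rho(x,y)]^{s}[V(x,y)]^{\f1p}=\bigl[\rho(x,y)\,V(x,y)\bigr]^{\theta}\bigl[[\rho(x,y)]^{s_1}[V(x,y)]^{\f1{p_1}}\bigr]^{1-\theta}
\]
hold for all $x,y\in\cx$. Hence, if $\ld=\mu_1^{\theta}\mu_2^{1-\theta}$ with $\mu_1,\mu_2\in(0,\infty)$, then, using that $u^{\theta}u^{1-\theta}>a^{\theta}b^{1-\theta}$ forces $u>a$ or $u>b$, one gets the inclusion $E_f(\ld,p,s)\subset E_f(\mu_1,1,1)\cup E_f(\mu_2,p_1,s_1)$, so that $(\mu\times\mu)(E_f(\ld,p,s))\le(\mu\times\mu)(E_f(\mu_1,1,1))+(\mu\times\mu)(E_f(\mu_2,p_1,s_1))$.

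For the first term I would invoke Theorem \ref{thm-2-3} (or just its upper estimate, Theorem \ref{thm-2-2}) \emph{with $p$ replaced by $1$}: this is legitimate since $f\in\cC_{\mathrm{c}}^\ast(\cx)$ satisfies a $(q,1)$-Poincar\'e inequality, and it gives $\sup_{\mu_1}\mu_1(\mu\times\mu)(E_f(\mu_1,1,1))\lesssim\|\blip f\|_{L^1(\mu)}=I$, whence $(\mu\times\mu)(E_f(\mu_1,1,1))\lesssim\mu_1^{-1}I$. For the second term, Chebyshev's inequality applied to the nonnegative function $(x,y)\mapsto|f(x)-f(y)|^{p_1}[\rho(x,y)]^{-s_1p_1}[V(x,y)]^{-1}$ (the diagonal being $(\mu\times\mu)$-null) yields $(\mu\times\mu)(E_f(\mu_2,p_1,s_1))\le\mu_2^{-p_1}J$. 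Combining these, for every $\ld\in(0,\infty)$ and every admissible pair $(\mu_1,\mu_2)$,
\[
(\mu\times\mu)\bigl(E_f(\ld,p,s)\bigr)\lesssim\frac{I}{\mu_1}+\frac{J}{\mu_2^{p_1}}.
\]

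It then remains to optimize the free parameter. One may assume $I,J\in(0,\infty)$: if $I=0$ or $J=0$ then $\blip f=0$ $\mu$-a.e., so $f$, having compact support, is $\equiv0$ and both sides of \eqref{hehe2} vanish; if $I=\infty$ or $J=\infty$ there is nothing to prove. Fix $\ld$, regard $\mu_2\in(0,\infty)$ as free, and put $\mu_1:=\ld^{1/\theta}\mu_2^{-(1-\theta)/\theta}$, so that $\mu_1^{\theta}\mu_2^{1-\theta}=\ld$; the displayed bound becomes $(\mu\times\mu)(E_f(\ld,p,s))\lesssim I\ld^{-1/\theta}\mu_2^{(1-\theta)/\theta}+J\mu_2^{-p_1}$. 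Choosing $\mu_2$ so that the two summands are comparable, i.e. $\mu_2^{(1-\theta)/\theta+p_1}\sim J\ld^{1/\theta}I^{-1}$, and using the identities $\f{1-\theta}{\theta}+p_1=\f{p_1}{p\theta}$ and $1-\theta p=\f{p(1-\theta)}{p_1}$ (both immediate from $\f1p=\f{1-\theta}{p_1}+\theta$), we arrive at $(\mu\times\mu)(E_f(\ld,p,s))\lesssim\ld^{-p}I^{\theta p}J^{p(1-\theta)/p_1}$. Multiplying by $\ld^p$, taking $p$-th roots, and passing to the supremum over $\ld\in(0,\infty)$ reproduces exactly \eqref{hehe2}.

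The argument has no conceptually deep step once Theorem \ref{thm-2-3} is in hand: the only genuinely new input is the weak-$L^1$-type control of $E_f(\cdot,1,1)$ by $\|\blip f\|_{L^1(\mu)}$, which is the $p=1$ instance of that theorem and is exactly the reason a $(q,1)$-Poincar\'e inequality appears in the hypotheses. The one place that calls for care is the bookkeeping of exponents in the final optimization — in particular, verifying that the factorization identity coincides with the prescribed relations among $s,p,s_1,p_1,\theta$ — together with disposing of the degenerate values $I,J\in\{0,\infty\}$ at the outset.
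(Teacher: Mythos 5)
Your proof is correct and follows essentially the same route as the paper's: the same pointwise factorization of the quotient into a $\theta$-weighted product of the ``Lipschitz'' and ``fractional'' quotients, the resulting two-set inclusion, the weak-type bound for the Lipschitz piece via Theorem \ref{thm-2-3} with exponent $1$ (which is exactly why a $(q,1)$-Poincar\'e inequality is assumed), Chebyshev for the $p_1$ piece, and optimization over the free splitting parameter. The paper parametrizes the splitting by a single constant $A$ (writing $\mu_1=A^{1-\theta}\lambda$ and $\mu_2=A^{-\theta}\lambda$), whereas you keep $\mu_1,\mu_2$ explicit subject to $\mu_1^{\theta}\mu_2^{1-\theta}=\lambda$, but these are the same bookkeeping; the exponent identities you check match the paper's. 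The only blemish is cosmetic: in disposing of $J=0$ you assert directly that $\operatorname{Lip}f=0$ a.e., when the cleaner statement is that $J=0$ forces $f(x)=f(y)$ for $\mu\times\mu$-a.e. $(x,y)$, hence (by continuity of $f$ and the full support of $\mu$) $f$ is constant, hence $\equiv 0$ by compact support; this does not affect the argument.
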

\begin{proof}
Let all the notation be as in the present corollary. Let $A\in(0,\infty)$ be a constant
which is specified later, and $E_f(\ld,r,s)$
as in \eqref{2104}. Since, for any $x,$ $ y\in\cx,$
$$\frac{|f(x)-f(y)|}{[\rho(x,y)]^s[V(x,y)]^{1/p}}
=\left\{\frac{|f(x)-f(y)|}{[\rho(x,y)]^{s_1}[V(x,y)]^{1/p_1}}\right\}^{1-\theta}
\left[\frac{|f(x)-f(y)|}{\rho(x,y)V(x,y)}\right]^{\theta},$$
it follows that, for any $\ld\in(0,\infty),$
\begin{align*}
	&E_f(\ld, p,s) \subset \left[E_f \left(A^{-\ta}\ld,
	p_1, s_1\right) \cup E_f\left(A^{1-\ta}\ld, 1,1\right)\right].
\end{align*}
This implies that, for any $\ld\in(0,\infty),$
\begin{align}\label{2207}
&\left[\int_\cx\mu(\{x\in\cx:\ (x,y)\in E_f(\ld,p,s)\})\,d\mu(y)\right]^{\f1p}\\\noz
&\quad\lesssim \left[\int_\cx\mu\lf(\lf\{x\in\cx:\ (x,y)\in E_f(A^{-\theta}\ld,p_1,s_1)\r\}\r)\,d\mu(y)\right]^{\f1p}\\\noz
&\quad\quad+\left[\int_\cx\mu\lf(\lf\{x\in\cx:\ (x,y)\in E_f(A^{1-\theta}\ld,1,1)\r\}\r)\,d\mu(y)\right]^{\f1p}\\\noz
&\quad\lesssim \left[\f{\mathrm{G}}{A^{-\theta}\ld}\right]^{\frac{p_1}{p}}+\left[\f{\mathrm{H}}{A^{1-\theta}\ld}\right]^{\f1p},
\end{align}
where
$$\mathrm{G}:=\sup_{\ld\in(0,\infty)}\ld
\left[\int_\cx\mu\lf(\lf\{x\in\cx:\ |f(x)-f(y)|>\ld[\rho(x,y)]^{s_1}
[V(x,y)]^{\f1{p_1}}\r\}\r)\,d\mu(y)\right]^{\f1{p_1}}$$
and
$$\mathrm{H}:=\sup_{\ld\in(0,\infty)}\ld
\int_\cx\mu(\{x\in\cx:\ |f(x)-f(y)|>\ld\rho(x,y)V(x,y)\})\,d\mu(y).$$
Choose $A\in(0,\infty)$ such that
$$\left(\frac{\mathrm{G}}{A^{-\theta}\ld}\right)^{\frac{p_1}{p}}=\left(\frac{\mathrm{H}}{A^{1-\theta}\ld}\right)^{\f1p}.$$
This,  combined with \eqref{2207}, implies that
\begin{align*}
&\left[\int_\cx\mu(\{x\in\cx:\ (x,y)\in E_f(\ld,p,s)\})\,d\mu(y)\right]^{\f1p}\\\noz	 &\quad\lesssim\left(\frac{\mathrm{G}}{A^{-\theta}
\ld}\right)^{\frac{p_1}{p}}+
\left(\frac{\mathrm{H}}{A^{1-\theta}\ld}
\right)^{\f1p}
\sim\left(\frac{\mathrm{G}}{A^{-\theta}
\ld}\right)^{\frac{p_1}{p}}
\sim\ld^{-1}\mathrm{H}^{\theta}\mathrm{G}^{1-\theta}.
\end{align*}
From this and Theorem \ref{thm-2-3}, we deduce
that
\begin{align*}
&\sup_{\ld\in(0,\infty)}\ld\left[
\int_\cx\mu\lf(\lf\{x\in\cx:\ |f(x)-f(y)|>\ld[\rho(x,y)]^{s}[V(x,y)]^{\f1p}\r\}\r)\,d\mu(y)\right]^{\f1p}\\
&\quad\lesssim\mathrm{H}^{\theta}\mathrm{G}^{1-\theta}\\
&\quad\lesssim\|\blip
f\|_{L^1(\mu)}^{\theta}\left[\int_{\cx}
\int_{\cx}\frac{|f(x)-f(y)|^{p_1}}{[\rho(x,y)]^{s
_1p_1}V(x,y)}\,d\mu(x)\,d
\mu(y)\right]^{\frac{1-\theta}{p_1}}.
\end{align*}
This finishes the proof of Corollary \ref{corollary6.6}.
\end{proof}
\begin{remark}
	In the case when $\cx:=\rn,$ $d\mu(x):=dx$, and $\rho(x,y):=|x-y|$ for
	any $x,\ y\in\rn,$ Corollary \ref{corollary6.6}
	is just  \cite[Corollary 5.2]{ref8}.
\end{remark}

\section{Applications}\label{section3}

In this section, we apply Theorem \ref{thm-2-3} and Corollaries \ref{corollart6.5} and \ref{corollary6.6}
to two concrete examples of metric measure spaces,
namely, $\rn$ with weighted Lebesgue measure
(see Subsection \ref{1r1} below)
and the complete Riemannian $n$-manifold with non-negative Ricci curvature
(see Subsection \ref{1r3} below).

\subsection{$A_p(\rn)$-weight}\label{1r1}
Let us first recall the notion of
Muckenhoupt weights $A_p(\rn)$ (see, for
instance, \cite{ref1}).
\begin{definition}\label{weight}
	An \emph{$A_p(\rn)$-weight} $\omega$, with
	$p\in[1,\infty)$, is a nonnegative locally
	integrable function on $\rn$ satisfying
	that, when $p\in(1,\infty),$
	$$[\omega]_{A_p(\rn)}:=\sup_{Q \subset\rn}
	\left[\frac{1}{|Q|}\int_{Q}\omega(x)\,dx
	\right]\left\{\frac{1}{|Q|}\int_Q[\omega(x)]^
	{\frac{1}{1-p}}\,dx\right\}^{p-1}<\infty,$$
	and
	$$[\omega]_{A_1(\rn)}:=\sup_{Q\subset\rn}
	\frac{1}{|Q|}\int_Q\omega(x)\,dx\left
	[\|\omega^{-1}\|_{L^\infty(Q)}\right]<\infty,$$
	where the suprema are taken over all
	cubes $Q\subset\rn$.
	
	Moreover, let
	$$A_{\infty}(\rn):=\bigcup_{p\in[1,\infty)}A_p(\rn).$$
\end{definition}
\begin{definition}\label{twl}
	Let $p\in[0,\infty)$ and $\omega\in A_{\infty}
	(\rn).$ The \emph{weighted Lebesgue space}
	$L^p_{\omega}(\rn)$ is defined to be the
	set of all measurable functions $f$ on $\rn$
	such that
	$$\|f\|_{L^p_{\omega}(\mathbb{R}^n)}:=\left
	[\int_{\mathbb{R}^n}|f(x)|^p\omega(x)\,dx
	\right]^{\frac{1}{p}}<\infty.$$
\end{definition}
Let $p\in[1,\infty]$ and $\omega\in A_p(\rn)$. For any measurable set $E\subset\rn,$ let $$\omega(E):=\int_{E}\omega(x)\,dx.$$ The following lemma is a part of
\cite[Proposition 7.1.5]{ref1}.
\begin{lemma}\label{Lemma2.1}
	Let $p\in[1,\infty)$ and $\omega\in A_p(\rn).$ Then the following statements hold true.
	\begin{itemize}
		\item[$\mathrm{(i)}$] For any $\lambda
		\in(1,\infty)$ and any cube $Q\subset\rn$, one has
		$\omega(\lambda Q)\leq [\omega]_{A_p
			(\rn)}\lambda^{np}\omega(Q);$
		\item[$\mathrm{(ii)}$]
		$$[\omega]_{A_p(\rn)}=\sup_{Q \subset\rn}
		\sup_{\substack{f\mathbf{1}_Q\in L^p_\omega(\rn)\\
				\int_Q|f(t)|^p\omega(t)\,dt\in(0,\infty)}}\frac
		{[\frac{1}{|Q|}\int_Q|f(t)|\,dt]^p}
		{\frac{1}{\omega(Q)}\int_Q|f(t)|^p\omega(t)\,dt},$$
		where the supremum is taken over all cubes $Q\subset\rn$.
	\end{itemize}
\end{lemma}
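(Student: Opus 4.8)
The plan is to establish (ii) first and then derive (i) from it via a single well‑chosen test function. Throughout I would fix a cube $Q\subset\rn$: both sides of (ii) are suprema over all cubes of quantities involving only $\omega|_Q$, so it suffices to compare, for a fixed $Q$, the inner supremum over admissible $f$ with the ``$A_p$‑ratio on $Q$'', namely $\frac1{|Q|}\int_Q\omega\,[\frac1{|Q|}\int_Q\omega^{1/(1-p)}]^{p-1}$ when $p\in(1,\infty)$ and $\frac1{|Q|}\int_Q\omega\,\|\omega^{-1}\|_{L^\infty(Q)}$ when $p=1$, and then pass to the supremum over $Q$. Note that the inner supremum is always over a nonempty set, since $f\equiv1$ is admissible.

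For the bound ``$\leq$'' in (ii) with $p\in(1,\infty)$: given $f$ with $f\mathbf{1}_Q\in L^p_\omega(\rn)$ and $\int_Q|f|^p\omega\in(0,\infty)$, I would write $|f|=(|f|^p\omega)^{1/p}\,\omega^{-1/p}$ and apply H\"older's inequality on $Q$ with exponents $p$ and $p'$; since $-p'/p=-1/(p-1)=1/(1-p)$ and $p/p'=p-1$, raising the resulting inequality to the power $p$ and dividing by $\frac1{\omega(Q)}\int_Q|f|^p\omega$ produces exactly the $A_p$‑ratio on $Q$, which is $\leq[\omega]_{A_p(\rn)}$. For the reverse bound ``$\geq$'' I would test with $f:=\omega^{1/(1-p)}$; since $|f|^p\omega=\omega^{p/(1-p)+1}=\omega^{1/(1-p)}$, both integrals in the ratio equal $\int_Q\omega^{1/(1-p)}$ and the ratio collapses precisely to the $A_p$‑ratio on $Q$. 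The one point to check is that this $f$ is admissible, i.e. $\omega^{1/(1-p)}\mathbf{1}_Q\in L^1(\rn)$ with $\int_Q\omega^{1/(1-p)}\in(0,\infty)$; finiteness is immediate from $\omega\in A_p(\rn)$ (i.e. $[\omega]_{A_p(\rn)}<\infty$), and positivity from the fact that $\omega>0$ a.e., so no limiting argument is strictly needed, although the bounded truncations $f_\varepsilon:=(\omega+\varepsilon)^{1/(1-p)}$ together with the monotone convergence theorem as $\varepsilon\to0^+$ would serve equally well.

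The case $p=1$ of (ii) is parallel but simpler. For ``$\leq$'', the pointwise bound $\frac1{|Q|}\int_Q|f|\leq\|\omega^{-1}\|_{L^\infty(Q)}\,\frac1{|Q|}\int_Q|f|\omega$ gives ratio $\leq\|\omega^{-1}\|_{L^\infty(Q)}\,\omega(Q)/|Q|$. For ``$\geq$'', I would test with $f:=\mathbf{1}_{E_\varepsilon}$ where $E_\varepsilon:=\{x\in Q:\ \omega(x)<(1+\varepsilon)\,\essinf_Q\omega\}$ has positive Lebesgue measure; then $\int_{E_\varepsilon}\omega\leq(1+\varepsilon)\,\essinf_Q\omega\,|E_\varepsilon|$, so the ratio is at least $(1+\varepsilon)^{-1}\|\omega^{-1}\|_{L^\infty(Q)}\,\omega(Q)/|Q|$, and letting $\varepsilon\to0^+$ and taking the supremum over $Q$ completes (ii).

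Finally, (i) follows from (ii) by testing with $f=\mathbf{1}_Q$ on the cube $\lambda Q$ (for $\lambda\in(1,\infty)$): here $\frac1{|\lambda Q|}\int_{\lambda Q}|f|=\lambda^{-n}$ and $\frac1{\omega(\lambda Q)}\int_{\lambda Q}|f|^p\omega=\omega(Q)/\omega(\lambda Q)$, so (ii) yields $\lambda^{-np}\,\omega(\lambda Q)/\omega(Q)\leq[\omega]_{A_p(\rn)}$, i.e. $\omega(\lambda Q)\leq[\omega]_{A_p(\rn)}\lambda^{np}\omega(Q)$; this argument is uniform in $p\in[1,\infty)$. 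The only place needing any care is the admissibility of the extremizing test function $\omega^{1/(1-p)}$ in the lower bound of (ii) when $p>1$, which is where the (otherwise unused) hypothesis $\omega\in A_p(\rn)$ — or the $\varepsilon$‑regularization — enters; the rest is bookkeeping with H\"older's inequality.
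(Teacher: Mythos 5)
Your proof is correct and complete. The paper does not prove this lemma; it simply cites \cite[Proposition 7.1.5]{ref1} (Grafakos), and the argument you give --- H\"older's inequality with the splitting $|f|=(|f|^p\omega)^{1/p}\omega^{-1/p}$ for the upper bound in (ii), the extremizer $f=\omega^{1/(1-p)}$ (resp.\ $\mathbf{1}_{E_\varepsilon}$ when $p=1$) for the lower bound, and then testing (ii) with $f=\mathbf{1}_Q$ on $\lambda Q$ to deduce (i) --- is exactly the standard textbook proof that reference contains, so the two approaches coincide.
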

The following conclusion shows that $C_{\mathrm{c}}^2(\rn)\subset C_{\mathrm{c}}^\ast(\rn).$
\begin{proposition}\label{1236}
Let $f\in C_{\mathrm{c}}^2(\rn).$ Then,
for any $x\in\rn,$
\begin{align}\label{1235}
\lim_{r\to0}\sup_{|h|<r}\frac{|f(x+h)-f(x)|}{r}=|\nabla f(x)|
\end{align}
converges uniformly on $\rn$.
Moreover, for any $x\in\rn,$ $|\nabla f(x)|=\operatorname{Lip} f(x)=\lip f(x)$ and  $C_{\mathrm{c}}^2(\rn)\subset \cC_{\mathrm{c}}^\ast(\rn).$
\end{proposition}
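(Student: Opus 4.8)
The plan is to prove the uniform convergence in \eqref{1235} and then deduce the identifications of $\operatorname{Lip}f$, $\lip f$, and the membership $C^2_{\mathrm c}(\rn)\subset\cC^\ast_{\mathrm c}(\rn)$. First I would apply the Taylor remainder theorem to $f\in C^2_{\mathrm c}(\rn)$: for any $x\in\rn$ and any $h\in\rn$,
$$f(x+h)-f(x)=\nabla f(x)\cdot h+\frac12\, h^{\top}(\nabla^2 f(\xi_{x,h}))\,h$$
for some $\xi_{x,h}$ on the segment joining $x$ and $x+h$. Since $f\in C^2_{\mathrm c}(\rn)$, the Hessian $\nabla^2 f$ is bounded and uniformly continuous on $\rn$; set $M:=\sup_{z\in\rn}\|\nabla^2 f(z)\|$. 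This yields, for all $x\in\rn$ and $|h|<r$,
$$\bigl|\,|f(x+h)-f(x)|-|\nabla f(x)\cdot h|\,\bigr|\le \tfrac12 M|h|^2\le \tfrac12 M r|h|\le \tfrac12 M r^2.$$

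Next I would divide by $r$ and take the supremum over $|h|<r$. On the one hand, $\sup_{|h|<r}|\nabla f(x)\cdot h|/r=|\nabla f(x)|$ exactly (the supremum of the linear functional over the open ball of radius $r$, divided by $r$, is its operator norm, which equals $|\nabla f(x)|$). On the other hand, the inequality above shows
$$\Bigl|\,\sup_{|h|<r}\frac{|f(x+h)-f(x)|}{r}-|\nabla f(x)|\,\Bigr|\le \sup_{|h|<r}\frac{\bigl|\,|f(x+h)-f(x)|-|\nabla f(x)\cdot h|\,\bigr|}{r}\le \tfrac12 M r,$$
where the bound on the right is independent of $x$. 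Letting $r\to0$ gives \eqref{1235} with convergence uniform in $x\in\rn$.

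Finally I would read off the remaining conclusions. The uniform estimate just established shows in particular that, for each $x$, the limit superior and limit inferior in the definitions of $\operatorname{Lip}f(x)$ and $\lip f(x)$ coincide with $|\nabla f(x)|$ (note $\sup_{y\in B(x,r)}|f(x)-f(y)|/r=\sup_{|h|<r}|f(x+h)-f(x)|/r$), so $\operatorname{Lip}f(x)=\lip f(x)=|\nabla f(x)|$ for every $x\in\rn$; moreover the convergence $r\to0$ is uniform, as required in the definition of $\cC^\ast_{\mathrm c}(\rn)$. Since $f\in C^2_{\mathrm c}(\rn)$ has compact support, $\nabla f$ is continuous with $\supp(\nabla f)\subset\supp f$ compact, hence $\operatorname{Lip}f=\lip f=|\nabla f|\in C_{\mathrm c}(\rn)$; in particular $f\in\Lip(\rn)$ (the mean value theorem gives the Lipschitz bound $\sup|\nabla f|$). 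Therefore $f\in\cC^\ast_{\mathrm c}(\rn)$, which establishes the inclusion. There is no serious obstacle here; the only point requiring a little care is the elementary fact that $\sup_{|h|<r}|\nabla f(x)\cdot h|=r|\nabla f(x)|$, together with keeping all error bounds uniform in $x$, which is automatic from the global bound on $\nabla^2 f$.
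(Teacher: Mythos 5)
Your proof is correct and follows essentially the same approach as the paper: apply the Taylor remainder theorem to get a second-order error bounded uniformly by the global supremum of the Hessian, divide by $r$, take the supremum over $|h|<r$, and observe that the error $O(r)$ is uniform in $x$. You are a bit more explicit than the paper in spelling out the inequality $\bigl|\sup_h a(h)-\sup_h b(h)\bigr|\le\sup_h|a(h)-b(h)|$ and in checking all four clauses of the definition of $\cC_{\mathrm{c}}^\ast(\rn)$ (the Lipschitz bound via the mean value theorem, $\lip f=\blip f=|\nabla f|$, continuity and compact support of $|\nabla f|$, and uniformity), but the substance is identical.
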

\begin{proof}
Let all the notation be as in the present proposition. Let $f\in C_{\mathrm{c}}^2(\rn)$
and $r\in(0,\infty).$ By the Taylor remainder theorem, we find that,
for any $x,\ h:=(h_1,\ldots,h_n)\in\rn
$, there exists a $\theta\in(0,1)$ such that
\begin{align*}
f(x+h)=f(x)+\sum_{i=1}^n\frac{\p f(x)}{\p x_i}h_i+\sum_{i,j=1}^n\frac{\p^2 f(x+\theta h)}{2\p x_i \p x_j}h_ih_j.
\end{align*}
From this, we deduce that, for any $x\in\rn$ and $h\in\rn$ with $|h|<r,$
\begin{align*}
\left|\sum_{i=1}^n\frac{\p f(x)}{\p x_i}\frac{h_i}{r}\right|-\frac{Cn^2r}{2}\leq\frac{|f(x+h)-f(x)|}{r}\leq |\nabla f(x)|+\frac{Cn^2r}{2},
\end{align*}
where
$$C:=\sup_{x\in\rn}\sum_{i,j=1}^n\left|\frac{\p^2 f(x)}{\p x_i \p x_j}\right|.$$
This implies that
\begin{align*}
\left|\sup_{|h|<r}\frac{|f(x+h)-f(x)|}{r}-|\nabla f(x)|\right|\leq \frac{Cn^2r}{2}.
\end{align*}
Using this, we conclude that \eqref{1235} holds true, which completes the proof of Proposition  \ref{1236}.
\end{proof}
As an application of Theorem \ref{thm-2-3}, we have the following conclusion.
\begin{theorem}\label{corollary9999}
	Let $ p\in[1,\infty)$ and $\og\in A_p(\rn)$. Then, for any $f\in
	C_{\mathrm{c}}^2(\rn)$
    \begin{align*}
    &\sup_{\ld\in(0,\infty)}\ld^p\int_{\rn}\int_\rn\mathbf{1}_{\{(x,y)\in\rn\times\rn:\ |f(x)-f(y)|>\ld|x-y|[\omega(B(x,|x-y|))]^{1/p}\}}(x,y)\omega(x)\omega(y)\,dx\,dy\\
    &\quad\sim\int_{\rn}|\nabla f(x)|^p\omega(x)\,dx,
    \end{align*}
	where the positive
	equivalence constants  depend only on $n,$ $p,$ and $[\omega]_{A_p(\rn)}$.
\end{theorem}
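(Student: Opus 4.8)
The plan is to deduce Theorem~\ref{corollary9999} from Theorem~\ref{thm-2-3} by verifying that the weighted Euclidean space $(\rn,|\cdot-\cdot|,\omega\,dx)$ fits into the abstract framework. First I would check that $\mu:=\omega\,dx$ is a doubling measure on $\rn$: this is immediate from Lemma~\ref{Lemma2.1}(i) (with balls comparable to cubes), so $(\rn,|\cdot|,\omega\,dx)$ is a metric measure space of homogeneous type with doubling constant controlled by $[\omega]_{A_p(\rn)}$ and $n$. Second, by Proposition~\ref{1236} we have $C_{\mathrm{c}}^2(\rn)\subset\cC_{\mathrm{c}}^\ast(\rn)$ and $\blip f=\lip f=|\nabla f|$ pointwise, so the right-hand side $\int_{\rn}[\blip f(x)]^p\,d\mu(x)$ is exactly $\int_{\rn}|\nabla f(x)|^p\omega(x)\,dx$, and the left-hand set $D_\ld$ becomes precisely the set appearing in the statement with $V(x,y)=\omega(B(x,|x-y|))$.

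The one genuine ingredient that must be supplied is the Poincar\'e inequality of Definition~\ref{45622}: I need to produce, for some $q\in[1,\infty)$, the functionals $\{\ell_B\}_{B\in\BB}$ and constants $C_1,C_2,\tau$ so that \eqref{3-9-a} and \eqref{2-8-eq} hold for every $f\in\Lip(\rn)$, with constants depending only on $n,p,[\omega]_{A_p(\rn)}$. The natural choice is the weighted average $\ell_B(\vi):=\frac{1}{\omega(B)}\int_B\vi(x)\,\omega(x)\,dx$; then $\ell_B(1)=1$ trivially, and H\"older's inequality gives \eqref{3-9-a} with $q$ arbitrary and $C_1=1$ (using $\omega$-measure). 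For the oscillation estimate \eqref{2-8-eq} I would take $q=1$: one needs
$$\frac{1}{\omega(B)}\int_B|f(x)-\ell_B(f)|\,\omega(x)\,dx\le C_2\,r_B\left[\frac{1}{\omega(\tau B)}\int_{\tau B}|\nabla f(x)|^p\,\omega(x)\,dx\right]^{1/p}.$$
This is the weighted $(1,p)$-Poincar\'e inequality on balls. For $p>1$ it follows from the classical unweighted Poincar\'e inequality on a ball (using $f(x)-f_B=\int$ against the Riesz-potential kernel $|x-y|^{1-n}$, hence pointwise domination by a truncated fractional integral of $|\nabla f|$) combined with the weighted boundedness of the fractional maximal/Riesz operator, which holds because $\omega\in A_p(\rn)$; one may also invoke the well-documented fact (e.g.\ from the theory of weighted Sobolev--Poincar\'e inequalities) that $A_p$-weights support a $(1,p)$-Poincar\'e inequality with $\tau$ a fixed dilation constant. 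For $p=1$ I would use that $A_1\subset A_p$ for all $p>1$ together with a limiting/H\"older argument, or directly the $A_1$ condition $\|\omega^{-1}\|_{L^\infty(Q)}\,\frac{1}{|Q|}\int_Q\omega\lesssim1$ to pass from the unweighted $(1,1)$-Poincar\'e inequality to the weighted one.

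Once the $(1,p)$-Poincar\'e inequality is in hand, Theorem~\ref{thm-2-3} applies verbatim with $q=1$ and yields the claimed equivalence, with equivalence constants depending only on $p$, $q=1$, the doubling constant of $\omega\,dx$, and $C_1,C_2,\tau$ — all of which reduce to dependence on $n$, $p$, and $[\omega]_{A_p(\rn)}$. I expect the main obstacle to be the precise constant-tracking in the weighted Poincar\'e inequality, i.e.\ making sure $C_2$ and the dilation factor $\tau$ depend only on $n,p,[\omega]_{A_p(\rn)}$ and not on the particular ball; this is where one must be careful to use the $A_p$ bound for the relevant weighted Riesz-potential or fractional-maximal estimate rather than an abstract self-improving argument whose constants are harder to control. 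Everything else — doubling, the identification $\blip f=|\nabla f|$, and the membership $C_{\mathrm{c}}^2(\rn)\subset\cC_{\mathrm{c}}^\ast(\rn)$ — is already supplied by the results quoted above.
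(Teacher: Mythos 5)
Your proposal follows the paper's strategy exactly at the level of structure: reduce to Theorem~\ref{thm-2-3} by verifying that $\omega\,dx$ is doubling (Lemma~\ref{Lemma2.1}(i)), using Proposition~\ref{1236} for $C_{\mathrm{c}}^2(\rn)\subset\cC_{\mathrm{c}}^\ast(\rn)$ and $\blip f=|\nabla f|$, and then checking a $(q,p)$-Poincar\'e inequality in the sense of Definition~\ref{45622}. The only divergence is in the last step. The paper chooses $\ell_B$ to be the \emph{unweighted} average $\frac{1}{|B|}\int_B\varphi$ and $q=p$ with $\tau=1$: then \eqref{3-9-a} comes from Lemma~\ref{Lemma2.1}(ii) (which bounds the unweighted average by the weighted $L^p$ average), and \eqref{2-8-eq} is cited directly from \cite[Corollary~1.8]{PR20}. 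You instead take $\ell_B$ to be the $\omega$-weighted average and $q=1$, which makes \eqref{3-9-a} trivial, and you propose to re-derive the weighted $(1,p)$-Poincar\'e inequality \eqref{2-8-eq} from the pointwise bound $|f(x)-f_B|\lesssim\int_B|\nabla f(y)|\,|x-y|^{1-n}\,dy$ together with $M:L^p_\omega\to L^p_\omega$ for $p>1$ (or the $A_1$ inequality $M\omega\lesssim[\omega]_{A_1}\omega$ when $p=1$). Both routes are valid and give constants depending only on $n$, $p$, and $[\omega]_{A_p(\rn)}$; the paper's version is shorter by delegating the Poincar\'e input to \cite{PR20}, while yours is more self-contained but requires the extra care for $p=1$ that you already flag. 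There is no gap, merely a different (and acceptable) choice of $\ell_B$, $q$, and provenance for the Poincar\'e estimate.
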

\begin{proof}
	Let all the notation be as in the present theorem. Let $\omega\in A_p(\rn)$.
	Using Theorem \ref{thm-2-3} with $\cx,$ $\rho(x,y),$ and $d\mu(x)$
	replaced, respectively, by $\rn$, $|x-y|,$ and
	$\omega(x)dx,$
	to show Theorem \ref{corollary9999},
	we only need to prove that, for any $f\in C_{\mathrm{c}}^2(\rn)$, $f$
	satisfies a $(p,p)$-Poincar\'e inequality with
	constants $\tau=1,$ $C_1,$ and $C_2$ independent of $f.$
	For any ball $B\subset\rn$ and $\varphi\in
	C_{\mathrm{b}}(\rn),$ let
	$$\ell_{B}(\varphi):=\frac{1}{|B|}\int_{B}\varphi(x)\,dx.$$
From Proposition \ref{1236}, Lemma \ref{Lemma2.1}(ii),  and
\cite[Corollary 1.8]{PR20},
it
follows that there exist some  constants
$C_1,$ $C_2\in [1,\infty)$ such that, for any
$f\in C_{\mathrm{c}}^2(\rn)$ and $\vi\in C_{\mathrm{b}}(\rn),$
\eqref{3-9-a}  and \eqref{2-8-eq} hold
true with $q=p$ and $\tau=1.$
Thus, for any $f\in C_{\mathrm{c}}^2(\rn)$, $f$
satisfies a $(p,p)$-Poincar\'e inequality with
constants $\tau=1,$ $C_1,$ and $C_2$ independent of $f.$ This finishes
	the proof of Theorem \ref{corollary9999}.
\end{proof}
\begin{remark}
	Let $\omega=1$. In this case, Theorem \ref{corollary9999} is just \cite[Theorem 1.1]{ref8}.
\end{remark}
As the applications of Corollaries \ref{corollart6.5} and \ref{corollary6.6}, we
have the following conclusions. Since the proof  is similar to that of
Theorem \ref{corollary9999}, we omit the details here.
\begin{corollary}\label{10}
	Let $ p\in[1,\infty)$ and $\og\in A_p(\rn)$. Then there exists a positive constant $C$ such that, for any $f\in
	C_{\mathrm{c}}^2(\rn),$
	\begin{align*}
	&\sup_{\ld\in(0,\infty)}\ld\left[\int_{\rn}\int_\rn\mathbf{1}_{\{(x,y)\in\rn\times\rn:\ |f(x)-f(y)|>\ld|x-y|^{1/p}[\omega(B(x,|x-y|))]^{1/p}\}}(x,y)\omega(x)\omega(y)\,dx\,dy\right]^{\frac{1}{p}}\\
	&\quad \leq C\|f\|^{1-\f1p}_{L^\infty(\rn,\omega\,dx
		)}\left[\int_{\rn}|\nabla f(x)|\omega(x)\,dx\right]^{\f1p}.
	\end{align*}
\end{corollary}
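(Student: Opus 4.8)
The plan is to deduce Corollary \ref{10} from Corollary \ref{corollart6.5} by specializing the metric measure space to $\cX:=\rn$ equipped with the Euclidean distance $\rho(x,y):=|x-y|$ and the measure $d\mu(x):=\omega(x)\,dx$. Under this specialization $V(x,y)=\mu(B(x,\rho(x,y)))=\omega(B(x,|x-y|))$, so the exceptional set \eqref{keydef} becomes exactly the set indexing the double integral in Corollary \ref{10}; moreover $d\mu(x)\,d\mu(y)=\omega(x)\omega(y)\,dx\,dy$, $\|f\|_{L^\infty(\mu)}=\|f\|_{L^\infty(\rn,\omega\,dx)}$ (since an $A_p(\rn)$-weight is positive almost everywhere), and, by Proposition \ref{1236}, $\|\blip f\|_{L^1(\mu)}=\int_{\rn}|\nabla f(x)|\omega(x)\,dx$ for every $f\in C_{\mathrm{c}}^2(\rn)$. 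Hence \eqref{hehe1} becomes literally the asserted inequality once the hypotheses of Corollary \ref{corollart6.5} have been checked, and the resulting constant depends only on $n$, $p$, and $[\omega]_{A_p(\rn)}$.

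Thus the whole proof reduces to verifying, for an arbitrary $f\in C_{\mathrm{c}}^2(\rn)$, that: \emph{(a)} $(\rn,|\cdot|,\omega\,dx)$ is a metric measure space of homogeneous type satisfying the standing assumptions; \emph{(b)} $f\in\cC_{\mathrm{c}}^\ast(\rn)$; and \emph{(c)} $f$ satisfies a $(q,1)$-Poincar\'e inequality (Definition \ref{45622}) with constants independent of $f$. Item \emph{(a)} is immediate: Lemma \ref{Lemma2.1}(i), together with the standard comparison of balls and cubes in $\rn$, shows that $\omega\,dx$ is doubling with doubling constant controlled by $[\omega]_{A_p(\rn)}$, while $\mu(B)\in(0,\infty)$ and $\mu(\{x\})=0$ are clear. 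Item \emph{(b)} is exactly Proposition \ref{1236}, which also gives $\blip f=\lip f=|\nabla f|$ pointwise on $\rn$.

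For item \emph{(c)} I would take $q:=p$, $\tau:=1$, and the unweighted ball averages $\ell_B(\varphi):=\frac{1}{|B|}\int_B\varphi(x)\,dx$ as the linear functionals. Then $\ell_B(1)=1$ is trivial, and inequality \eqref{3-9-a} with exponent $q=p$ --- namely $\frac{1}{|B|}\int_B|\varphi|\le C_1[\frac{1}{\omega(B)}\int_B|\varphi|^p\omega]^{1/p}$ --- is precisely the Muckenhoupt testing inequality furnished by Lemma \ref{Lemma2.1}(ii), with $C_1\sim[\omega]_{A_p(\rn)}^{1/p}$. The substantive ingredient is inequality \eqref{2-8-eq}, i.e.\ the weighted Poincar\'e estimate $[\frac{1}{\omega(B)}\int_B|f-\ell_B(f)|^p\omega]^{1/p}\le C_2 r_B\frac{1}{\omega(B)}\int_B|\nabla f|\omega$ with the gradient appearing in $L^1(\omega\,dx)$; I would derive it from the Poincar\'e inequalities available for Muckenhoupt weights --- the Fabes--Kenig--Serapioni circle of results and their refinements, in particular \cite[Corollary 1.8]{PR20} as already used in the proof of Theorem \ref{corollary9999} --- and it is exactly here that care is needed, since the usual formulation carries the gradient in $L^p(\omega\,dx)$ and passing to the $L^1$ (endpoint) formulation is what fixes the admissible choices of $q$ and $\tau$. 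Granting \emph{(c)}, Corollary \ref{corollart6.5} applies and produces \eqref{hehe1}, which is the assertion; the remaining steps --- the ball/cube translations in the $A_p$ estimates and the rewriting of the abstract norms --- are routine. The step I expect to be the genuine obstacle is therefore this weighted $L^1$-gradient Poincar\'e inequality \eqref{2-8-eq}.
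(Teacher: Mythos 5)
Your proposal follows the paper's route exactly: the paper's proof of Corollary \ref{10} is simply declared ``similar to that of Theorem \ref{corollary9999}'', which is precisely the specialization of Corollary \ref{corollart6.5} to $(\rn,|\cdot|,\omega\,dx)$ via Lemma \ref{Lemma2.1}, Proposition \ref{1236}, and \cite[Corollary 1.8]{PR20} that you describe. Your flag on the weighted $(p,1)$-Poincar\'e inequality with the gradient in $L^1(\omega\,dx)$ is well placed, since the proof of Theorem \ref{corollary9999} only exhibits a $(p,p)$-Poincar\'e inequality while Corollary \ref{corollart6.5} requires a $(q,1)$ one, and this $L^1$-gradient endpoint is exactly what the paper's cross-reference leaves implicit.
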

\begin{remark}
Let $\omega=1$. In this case, Corollary \ref{10} is just \cite[Corollary 5.1]{ref8}.
\end{remark}
\begin{corollary}\label{11}
	Let $s_1 \in(0,1)$, $p_1\in(1,\infty),$ and
	$\ta\in (0,1)$. Let $s\in (s_1, 1)$ and
	$p\in (1, p_1)$ satisfy $s:=(1-\theta) s_1+\theta$
	and $ \frac{1}{p}:=\frac{1-\theta}{p_1}+\theta$.
	Let  $\og\in A_p(\rn)$. Then there exists a positive constant $C$ such that,  for any $f\in
	C_{\mathrm{c}}^2(\rn)$,
	\begin{align*}
		&\sup_{\ld\in(0,\infty)}\ld\left[
		\int_\rn\int_\rn\mathbf{1}_{\{(x,y)\in\rn\times\rn:\ |f(x)-f(y)|>\ld|x-y|^{s}[\omega(B(x,|x-y|))]^{1/p}\}}(x,y)\omega(x)\omega(y)\,dx\,dy\right]^{\f1p}\\\noz
		&\quad\leq C\left[\int_{\rn}|\nabla f(x)|\omega(x)\,dx\right]^{\theta}\left[\int_{\rn}
		 \int_{\rn}\frac{|f(x)-f(y)|^{p_1}}{|x-y|^{s_1p_1}\omega(B(x,|x-y|))}
\omega(x)\omega(y)\,dx\,dy\right]^{\frac{1-\theta}{p_1}}.
	\end{align*}
\end{corollary}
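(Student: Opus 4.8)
The plan is to deduce Corollary~\ref{11} from Corollary~\ref{corollary6.6}, in exactly the way Theorem~\ref{corollary9999} was deduced from Theorem~\ref{thm-2-3}. Fix $\omega\in A_p(\rn)$ and specialize Corollary~\ref{corollary6.6} to the metric measure space $(\cX,\rho,\mu)$ with $\cX:=\rn$, $\rho(x,y):=|x-y|$, and $d\mu(x):=\omega(x)\,dx$. For this choice $\mu$ is doubling with doubling constant depending only on $n$, $p$, and $[\omega]_{A_p(\rn)}$ (Lemma~\ref{Lemma2.1}(i), passed from cubes to balls), $\mu(B)\in(0,\infty)$ for every ball $B$, and $\mu(\{x\})=0$, so $(\rn,|\cdot|,\mu)$ is a metric measure space of homogeneous type. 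By Proposition~\ref{1236}, $C_{\mathrm{c}}^2(\rn)\subset\cC_{\mathrm{c}}^\ast(\rn)$ --- note that $\cC_{\mathrm{c}}^\ast(\rn)$ is defined purely in terms of the metric --- and $\blip f=\lip f=|\nabla f|$ pointwise on $\rn$; moreover, for the present $\rho$ and $\mu$ one has $V(x,y)=\mu(B(x,|x-y|))=\omega(B(x,|x-y|))$ and $d\mu(x)\,d\mu(y)=\omega(x)\omega(y)\,dx\,dy$. Hence, once the Poincar\'e hypothesis of Corollary~\ref{corollary6.6} is verified, the inequality \eqref{hehe2} becomes verbatim the inequality claimed in Corollary~\ref{11}.

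The only point requiring work is therefore to show that every $f\in C_{\mathrm{c}}^2(\rn)$ satisfies a $(q,1)$-Poincar\'e inequality, for a suitable $q\in[1,\infty)$, with constants $C_1$, $C_2$, $\tau\in[1,\infty)$ depending only on $n$, $p$, and $[\omega]_{A_p(\rn)}$; this is where $\omega\in A_p(\rn)$ is used, and it is handled as in the proof of Theorem~\ref{corollary9999}. For a ball $B\subset\rn$ and $\vi\in BC(\rn)$ I would take $\ell_B(\vi):=\frac{1}{\mu(B)}\int_B\vi\,d\mu$, the $\omega$-average over $B$; then $\ell_B(1)=1$ and the second condition in \eqref{3-9-a} holds with $q=1$ and $C_1=1$, since $|\ell_B(\vi)|\le\frac{1}{\mu(B)}\int_B|\vi|\,d\mu$. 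Condition \eqref{2-8-eq}, with $q=1$ and with the exponent of $\lip f$ on its right-hand side equal to $1$, then reads
\[
\frac{1}{\mu(B)}\int_B\bigl|f(x)-\ell_B(f)\bigr|\,d\mu(x)\le C_2\,r_B\,\frac{1}{\mu(\tau B)}\int_{\tau B}|\nabla f(x)|\,d\mu(x),
\]
i.e.\ it is a weighted Poincar\'e inequality for the Muckenhoupt weight $\omega$, which --- as in the proof of Theorem~\ref{corollary9999} --- can be quoted from \cite[Corollary 1.8]{PR20} with $C_2$ and $\tau$ depending only on $n$, $p$, and $[\omega]_{A_p(\rn)}$. (Alternatively one may take $\ell_B$ to be the unweighted average over $B$ and verify \eqref{3-9-a} through Lemma~\ref{Lemma2.1}(ii).) Thus every $f\in C_{\mathrm{c}}^2(\rn)$ satisfies a $(q,1)$-Poincar\'e inequality with the required uniform constants.

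With this in hand, Corollary~\ref{corollary6.6} applies to each $f\in C_{\mathrm{c}}^2(\rn)$; substituting $\blip f=|\nabla f|$ and $V(x,y)=\omega(B(x,|x-y|))$ into \eqref{hehe2} produces exactly the inequality of Corollary~\ref{11}, whose constant $C$ --- which in Corollary~\ref{corollary6.6} depends only on $C_1$, $C_2$, $p$, $q$, $\tau$ --- therefore depends only on $n$, $p$, $[\omega]_{A_p(\rn)}$, and the fixed parameters $s_1$, $p_1$, $\theta$. (Corollary~\ref{10} follows in the same way from Corollary~\ref{corollart6.5}; both are weighted analogues of \cite[Corollaries 5.1 and 5.2]{ref8}.) The only genuinely substantive step --- and the only conceivable obstacle --- is the verification of the $(q,1)$-Poincar\'e inequality for $\omega\in A_p(\rn)$ with constants controlled by $n$, $p$, $[\omega]_{A_p(\rn)}$; once the appropriate weighted Poincar\'e inequality is identified and $\ell_B$ is chosen so that \eqref{3-9-a} is automatic, the bookkeeping of the enlargement $\tau$ and of the constants is routine, and every remaining step is a direct specialization of Corollary~\ref{corollary6.6}.
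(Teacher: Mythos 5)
Your proposal matches the paper's intended proof: the paper explicitly states that Corollary~\ref{11} is an application of Corollary~\ref{corollary6.6} whose proof is ``similar to that of Theorem~\ref{corollary9999}'', and you carry out precisely that specialization to $(\rn,|\cdot|,\omega\,dx)$, invoking Proposition~\ref{1236} for $C_{\mathrm{c}}^2(\rn)\subset\cC_{\mathrm{c}}^\ast(\rn)$, identifying $V(x,y)=\omega(B(x,|x-y|))$, and verifying the Poincar\'e hypothesis.

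One nuance is worth flagging, though. The proof of Theorem~\ref{corollary9999} establishes a $(p,p)$-Poincar\'e inequality (both exponents equal to the $A_p$ index $p$), with $\ell_B$ the unweighted average and \eqref{3-9-a} coming from Lemma~\ref{Lemma2.1}(ii). Corollary~\ref{corollary6.6}, by contrast, requires a $(q,1)$-Poincar\'e inequality --- the right-hand side of \eqref{2-8-eq} must carry the $L^1_\omega$-average of $\lip f$, not its $L^p_\omega$-average --- and this is a strictly stronger statement for $p>1$. Your choice of $\ell_B$ as the $\omega$-average so that \eqref{3-9-a} holds trivially with $q=1$ is the right move, but the phrase ``can be quoted from \cite[Corollary 1.8]{PR20}, as in the proof of Theorem~\ref{corollary9999}'' silently conflates the $(p,p)$- and $(1,1)$-versions. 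You should check that \cite[Corollary 1.8]{PR20} (or a neighboring result there) really does provide the $L^1$-gradient form for $\omega\in A_p(\rn)$ with $p>1$, since the standard representation-formula argument that yields the unweighted or $A_1$-weighted $(1,1)$-Poincar\'e inequality does not carry over directly to general $A_p$ weights. The paper itself does not spell this out, so you are in the same position as the authors, but it is the one step that is genuinely not ``the same as in Theorem~\ref{corollary9999}''.
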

\begin{remark}
Let $\omega=1$. In this case, Corollary \ref{11} is just \cite[Corollary 5.2]{ref8}.
\end{remark}
\subsection{Complete $n$-dimensional Riemannian Manifolds with Non-negative Ricci Curvature}\label{1r3}
Let us recall the notion about Riemannian manifolds,
which can be found in \cite[Chapters 1 and 2]{c92}.
\begin{definition}\label{dayi}
A Hausdorff space $M$ is called a \emph{smooth manifold of dimension $n$}
if there exist a family $\{U_\al\}_{\al\in\Gamma}$
of open sets in $\rn$, and
a family of homeomorphisms,
$$\{x_\al:\ U_\al\to x_\al(U_\al)\subset M\}_{\al\in\Gamma}$$
with $x_\al(U_\al):=\{p\in M:\ p=x_\al(z),\,z\in U_\al\}$,
such that
\begin{itemize}
\item [$\mathrm{(i)}$] $\bigcup_{\al\in\Gamma}x_\al(U_\al)=M;$
\item [$\mathrm{(ii)}$] for any  $\al,\ \beta\in\Gamma$ with $W:=x_\al(U_\al)\cap x_\beta(U_\beta)\neq \emptyset$,
    the mappings $x_{\beta}^{-1}\circ x_\al$ are infinitely differentiable in $W$;
\item [$\mathrm{(iii)}$] the family $\{(U_\al,x_\al)\}_{\al\in\Gamma}$ satisfy
that, if there exist an open set $V\subset\rn$
and a
homeomorphism $y:\ V\to y(V)\subset M$
such that $\{(U_\al,x_\al)\}_{\al\in\Gamma}\cup\{(V,y)\}$ satisfy (ii), then  $(V,y)\in\{(U_\al,x_\al)\}_{\al\in\Gamma}$.
\end{itemize}
\end{definition}
The pair $(U_\al,x_\al)$ in Definition \ref{dayi} with $p\in x_\al(U_\al)$ is called a \emph{parametrization of M at p}.
\begin{definition}
Let $M_1$ and $M_2$ be, respectively,  $n$-dimensional and  $m$-dimensional smooth manifolds. A mapping $\varphi:\ M_1\to M_2$ is
said to be \emph{smooth} at $p\in M_1$ if, for any given  parametrization $(V,y)$ of $M_2$ at $\varphi(p)$, there exists  a parametrization $(U,x)$ of $M_1$ at $p$ such that $\varphi(x(U))\subset y(V)$ and the mapping
$$y^{-1}\circ\varphi\circ x:\ U\subset\rn\to\mathbb{R}^m$$
is infinitely differentiable at $x^{-1}(p).$
A mapping $\varphi$ is called a \emph{smooth mapping} if $\varphi$ is smooth at any point $p\in M_1.$
\end{definition}
Let $M$ be a smooth manifold as in Definition \ref{dayi}. A \emph{smooth curve} $\al$ in $M$ is a smooth mapping $\al:\ (-\epsilon,\epsilon)\subset\mathbb{R}\to M$. A \emph{smooth function} $f$ in $M$ is
a smooth mapping $f:\ M\to\mathbb{R}.$
Denote by  the  \emph{symbol} $C_{\mathrm{c}}^\infty(M)$ the  set of all smooth functions on $M$
with compact support.
\begin{definition}
Let $M$ be a smooth manifold and
$\al: (-\epsilon,\epsilon)\subset\mathbb{R}\to M$ be a smooth curve. Let $\al(0)=p$ and $\mathcal{D}$ be the set of all real-valued functions $f$ on $M$ such that $f$ is smooth at $p.$ The \emph{tangent vector} to the curve $\al$
at $t=0$ is a mapping $\al'(0):\ \mathcal{D}\to\mathbb{R}$ defined by setting that, for any $f\in\mathcal{D},$
$$\al'(0)(f):=\left.\frac{d(f\circ\al)}{dt}\right|_{t=0}.$$
A \emph{tangent vector} at $p$ is a tangent vector at $t=0$ to some curve $\al:(-\epsilon,\epsilon)\subset\mathbb{R}\to M$ with $\al(0)=p.$ The set of all tangent vectors  at $p$ is denoted by $T_pM.$
\end{definition}
Let $M$ be a $n$-dimensional smooth manifold and $p\in M.$ Suppose that $(U,x)$ is a parametrization of $M$ at $p$ and $x(h_1,\ldots,h_n)=p,$ where $(h_1,\ldots,h_n)\in U.$ For any $i\in\{1,\ldots,n\}$ and $t$ sufficiently small,  let $\al_i(t):=x(h_1,\ldots,h_i+t,\ldots,h_n).$
Then $\al_i$ is a smooth curve with $\al_i(0)=p$ for any  $i\in\{1,\ldots,n\}$.
Denote the tangent vector $\al_i'(0)$ by $\left.\frac{\p}{\p x_i}\right|_p$ (see, for instance, \cite[p.\,8]{c92}).
  A \emph{vector field} $X$ on  $M$ is a map on $M$  such that, for any  $p\in M,$  $X(p)\in T_pM.$ For any $p\in M,$ noticing that $\{\frac{\p}{\p x_1}|_p,\ldots,\frac{\p}{\p x_n}|_p\}$ is a basis of $T_pM$ (see, for instance, \cite[p.\,8]{c92}), we can write
\begin{align}\label{vg}
X(p)=\sum_{i=1}^na_i(p)\left.\frac{\p}{\p x_i}\right|_p,
\end{align}
where $a_i(p)\in\mathbb{R}$ for any $i\in\{1,\ldots,n\}.$
\begin{definition}
Let $M$ be a $n$-dimensional smooth manifold as in Definition \ref{dayi}. A  vector field $X$ on  $M$ is called a \emph{smooth vector field} if, for any $p\in M$ and any parametrization $(U,x)$ of $M$ at $p$, the coefficient function $a_i(\cdot)$ as in \eqref{vg} is a smooth function on $U$ for any $i\in\{1,\ldots, n\}.$

\end{definition}
\begin{definition}\label{eee}
	A \emph{Riemannian metric} on a $n$-dimensional smooth manifold $M$ is a map  on $M$ such that, for any  $p\in M,$  $\langle\cdot,\cdot\rangle_p$ is an inner product on the tangent space $T_pM,$
which varies smoothly in the following sense: if $(U,x)$ is a parametrization of $p,$
then, for any $i,j\in\{1,\ldots,n\},$ $$\left\langle \left.\frac{\p}{\p x_i}\right|_{(\cdot)},\left.\frac{\p}{\p x_j}\right|_{(\cdot)}\right\rangle_{(\cdot)}$$ is a smooth function on $U.$
	A smooth manifold with a Riemannian metric is called a \emph{Riemannian manifold}.
\end{definition}
Let $M$ be a $n$-dimensional Riemannian manifold. For any $p\in M$ and $v\in T_pM,$ let
\begin{align*}
|v|_M:=\sqrt{\langle v,v\rangle_p},
\end{align*}
where $\langle \cdot,\cdot\rangle_p$ is as in Definition \ref{eee}.
\begin{definition}\label{ymy}
Let $M$ be a  Riemannian manifold as in Definition \ref{eee} and $f$ a smooth function on $M.$ The \emph{gradient of}  $f$, denoted by $\nabla f$, is a smooth vector field such that, for any  smooth vector field $X$ and any $p\in M$,
\begin{align}\label{tidu}
\langle\nabla f(p),X(p)\rangle_p=X(p)(f).
\end{align}
\end{definition}
\begin{remark}
Let $M$ be a  Riemannian manifold as in Definition \ref{eee}. By \cite[p.\,343]{L13}, we find that, for any smooth function $f$ on $M$, there exists a unique  smooth vector field satisfying \eqref{tidu}. This implies that, for any smooth function $f$ on $M$, $\nabla f$  exists and is unique. Thus, for any smooth function $f$ on $M$, $\nabla f$ in Definition \ref{ymy} is well defined.
\end{remark}

Let $M$ be a  Riemannian manifold as in Definition \ref{eee}. Let $\rho_M$ be the canonical distance function  on $M$ (see  \cite[p.\,146]{c92} for the  precise definition). By \cite[Lemma 6.2]{L13}, we find that $p_M$ is a metric on $M.$ For any $x\in M$ and $r\in(0,\infty),$ let $$B_M(x,r):=\{y\in M:\ \rho_M(x,y)<r\}.$$
Similarly to Proposition \ref{1236}, we have  the following conclusion.
\begin{proposition}\label{944}
	Let $M$ be a $n$-dimensional Riemannian manifold and $f\in C_{\mathrm{c}}^\infty(M).$ Then,
	for any $x\in M,$
	\begin{align}\label{1066}
		\lim_{r\to0}\sup_{y\in B_M(x,r)}\frac{|f(x)-f(y)|}{r}=|\nabla f(x)|_M
\end{align}
converges uniformly on $M$.	Moreover, for any $x\in M,$ $|\nabla f(x)|=\operatorname{Lip} f(x)=\lip f(x)$ and  $C_{\mathrm{c}}^\infty(M)\subset \cC_{\mathrm{c}}^\ast(M).$
\end{proposition}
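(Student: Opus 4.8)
The plan is to follow the proof of Proposition \ref{1236}, replacing the Euclidean Taylor expansion with a second-order expansion along radial geodesics and extracting uniformity from a compactness argument. Write $K:=\supp f$, which is compact, and fix a relatively compact open set $U$ with $K\subset U$. Since $\operatorname{Lip}f$, $\lip f$, and $|\nabla f|_M$ all vanish on $M\setminus K$, and since $B_M(x,r)\cap K=\emptyset$ once $r<\rho_M(x,K)$, the identity \eqref{1066} with uniform convergence holds trivially on $\{x\in M:\ \rho_M(x,K)>0\}$; thus it remains to establish the estimate, uniformly, for $x$ in the compact set $\overline U$.

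By compactness of $\overline U$, I would choose $r_0\in(0,\infty)$ less than the injectivity radius at every point of $\overline U$ and small enough that the map $(x,v)\mapsto\exp_x v$ is defined and smooth on $\{(x,v):\ x\in\overline U,\ |v|_x\le r_0\}$; for each such $x$, $\exp_x$ restricts to a diffeomorphism of $\{v\in T_xM:\ |v|_x<r_0\}$ onto $B_M(x,r_0)$ with $\rho_M(x,\exp_x v)=|v|_x$. For $y\in B_M(x,r_0)$ put $v_y:=\exp_x^{-1}(y)$, so $|v_y|_x=\rho_M(x,y)$. Applying the one-variable Taylor remainder theorem to $t\mapsto f(\exp_x(tv_y))$ on $[0,1]$, and using $\frac{d}{dt}f(\exp_x(tv))|_{t=0}=\langle\nabla f(x),v\rangle_x$ from Definition \ref{ymy}, I get
\[
\left|f(y)-f(x)-\langle\nabla f(x),v_y\rangle_x\right|\le\tfrac12\,C\,\rho_M(x,y)^2,
\]
where $C:=\sup\{|\tfrac{d^2}{dt^2}f(\exp_x(tv))|:\ x\in\overline U,\ |v|_x\le r_0,\ t\in[0,1]\}$. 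The key point is that $C<\infty$: the function $(x,v,t)\mapsto\frac{d^2}{dt^2}f(\exp_x(tv))$ is continuous on the compact set $\{(x,v):\ x\in\overline U,\ |v|_x\le r_0\}\times[0,1]$ because $(x,v)\mapsto f(\exp_x v)$ is smooth near the zero section of $TM$. Since $\sup_{|v|_x<r}|\langle\nabla f(x),v\rangle_x|=r\,|\nabla f(x)|_M$ by the Cauchy--Schwarz inequality (approached along the direction of $\nabla f(x)$, and trivial when $\nabla f(x)=0$), taking the supremum over $y\in B_M(x,r)$ in the displayed estimate yields, for all $x\in\overline U$ and $r\in(0,r_0)$,
\[
\left|\ \sup_{y\in B_M(x,r)}\frac{|f(x)-f(y)|}{r}-|\nabla f(x)|_M\ \right|\le\tfrac12\,C\,r .
\]
Together with the trivial region this proves \eqref{1066} with uniform convergence on $M$.

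Since the limit in \eqref{1066} exists at every $x$, the $\liminf$ and $\limsup$ defining $\lip f(x)$ and $\operatorname{Lip} f(x)$ both equal it, so $\lip f(x)=\operatorname{Lip} f(x)=|\nabla f(x)|_M$ for all $x\in M$. Moreover $f\in\Lip(M)$ (integrate $\nabla f$, which is bounded, along minimizing geodesics), and $\operatorname{Lip}f=\lip f=|\nabla f|_M$ is continuous with support contained in $K$, hence lies in $C_{\mathrm{c}}(M)$; together with the uniform convergence just established, all the defining properties of $\cC_{\mathrm{c}}^\ast(M)$ hold for $f$, so $C_{\mathrm{c}}^\infty(M)\subset\cC_{\mathrm{c}}^\ast(M)$.

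I expect the only real obstacle to be the uniform finiteness of $C$, i.e.\ upgrading the pointwise second-order Taylor bound to one with a single constant valid for every base point; this is exactly where the compactness of $\overline U$, the existence of a uniform injectivity radius on $\overline U$, and the smoothness of $(x,v)\mapsto f(\exp_x v)$ enter, and it is also the point at which the hypothesis that $f$ have compact support is genuinely used. The remaining steps are routine bookkeeping in geodesic normal coordinates, in direct parallel with Proposition \ref{1236}.
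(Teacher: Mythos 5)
Your argument follows the paper's proof in all essentials: both use the exponential map, a second-order Taylor expansion, and compactness of a neighborhood of $\supp f$ to get a uniform constant; the paper writes the expansion in $n$-variable normal coordinates with a local orthonormal frame $\{\nu_i\}_{i=1}^n$ and patches over finitely many balls $B_M(p,\delta(p)/2)$, while you apply the one-variable Taylor theorem along radial geodesics on a single compact set $\overline U$, which is somewhat cleaner but not a genuinely different route. Two small bookkeeping slips are worth flagging: the convergence is not trivially uniform on all of $\{x:\ \rho_M(x,K)>0\}$ (the rate there depends on $\rho_M(x,K)$) --- what you actually use is $\rho_M(M\setminus U,K)>0$, which gives uniformity on $M\setminus U$; and with $C:=\sup\{|\tfrac{d^2}{dt^2}f(\exp_x(tv))|:\ x\in\overline U,\ |v|_x\le r_0,\ t\in[0,1]\}$ the Lagrange remainder is $\le\tfrac12 C$, not $\tfrac12 C\,\rho_M(x,y)^2$, so to retain the $\rho_M(x,y)^2$ factor you should instead take the supremum over unit vectors $u$ and $s\in[0,r_0]$ of $|\tfrac{d^2}{ds^2}f(\exp_x(su))|$ (equivalently, bound the Hessian of $(x,w)\mapsto f(\exp_x w)$ on the compact set, as the paper in effect does). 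Both are easily repaired and do not affect the correctness of the approach.
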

\begin{proof}
Let all the notation be as in the present proposition. Let $p\in M$ and $\{\nu_i\}_{i=1}^n$
be a sequence of  smooth vector fields in a neighborhood $U$ of $p$ such that, for any
$x\in U,$ $\{\nu_i(x)\}_{i=1}^n$ is an orthonormality basis of $T_xM$. For any $r\in(0,\infty),$  let $$B(\mathbf{0},r):=\{y\in\rn:\ |y|<r\}.$$    By \cite[p.\,72]{c92}, we find that
there exists a positive constant $\delta(p)$ such that $B_M(p,\delta(p))\subset U$ and,
for any $(x,t)\in B_M(p,\delta(p))\times B(\mathbf{0},\delta(p)),$
\begin{align*}
F(x,t):=F(x,t_1,\ldots,t_n):=f\circ\exp_x\left(\sum_{i=1}^nt_i\nu_i(x)\right)
\end{align*}
is  smooth and
\begin{align*}
C:=\sup_{\substack{x\in B_M(p,\delta(p))\\t\in B(\mathbf{0},\delta(p))}}\sum_{i,j=1}^n\left|\frac{\p^2 [F(x,(\cdot))]}{\p t_i\p t_j}(t)\right|<\infty,
\end{align*}
where $\exp_x$ is the exponential map (see \cite[p.\,65]{c92} for the precise definition).
By the definition of $\nabla$ and \cite[Proposition 2.7 of Chapter 3]{c92}, we find that, for any $i\in\{1,\ldots,n\}$ and $x\in B_M(p,\delta(p))$,
\begin{align*}
\langle \nabla f(x),\nu_i(x)\rangle_x=\nu_i(x)(f)
=\left.\frac{df (\exp_x((\cdot)\nu_i(x)))}{d t}\right|_{t=0}=
\frac{\p [F(x,\cdot)]}{\p t_i}(\mathbf{0}).
\end{align*}
This, together with the fact that $\{\nu_i(x)\}_{i=1}^n$
is an orthonormality basis of $T_xM$, implies that
\begin{align*}
\nabla f(x)=\sum_{i=1}^n\frac{\p [F(x,\cdot)]}{\p t_i}(\mathbf{0})\nu_i(x)
\end{align*}
and
\begin{align*}
|\nabla f(x)|_M&=\sqrt{\langle \nabla f(x), \nabla f(x)\rangle_x}=
\sqrt{\sum_{i,j=1}^n\frac{\p [F(x,\cdot)]}{\p t_i}(\mathbf{0})\frac{\p [F(x,\cdot)]}{\p t_j}(\mathbf{0})\langle\nu_i(x),\nu_j(x)\rangle_x}\\
&=\sqrt{\sum_{i=1}^n\left|\frac{\p [F(x,\cdot)]}{\p t_i}(\mathbf{0})\right|^2}=:|\nabla F(x,\mathbf{0})|.
\end{align*}
On the other hand, by the Taylor remainder theorem, we conclude that, for any given $(x,t)\in B_M(p,\delta(p)/2)\times B(\mathbf{0},\delta(p))$, there exists a $\theta\in(0,1)$ such that
\begin{align}\label{1029}
F(x,t)=F(x,t_1,\ldots,t_n)=f(x)+\sum_{i=1}^n\frac{\p [F(x,\cdot)]}{\p t_i}(\mathbf{0})t_i+\sum_{i,j=1}^n\frac{\p^2 [F(x,(\cdot))]}{2\p t_i\p t_j}(\theta t)t_it_j.
\end{align}
Let $r\in(0,\delta(p)/2)$ be sufficiently small and $y\in B_M(x,r)\subset
 B_M(p,\delta(p)).$ By \cite[Corollary 6.11]{L13}, we find that the map $\exp_x$ is a bijection from $\{v\in T_xM:\ |v|_M<r\}$ to $B_M(x,r)$. From this and the assumption that $\{\nu_i(x)\}_{i=1}^n$
is a orthonormality basis of $T_xM$, it follows that there exists an $(a_1(y),\,\ldots,a_n(y))\in
 B(\mathbf{0},r)$ such that
\begin{align*}
y=\exp_x\left(\sum_{i=1}^na_i(y)\nu_i(x)\right).
\end{align*}
Using this and \eqref{1029}, we conclude that
\begin{align*}
f(y)-f(x)&=F(x,a_1(y),\ldots,a_n(y))-f(x)\\\noz
&=\sum_{i=1}^n\frac{\p [F(x,\cdot)]}{\p t_i}(\mathbf{0})a_i(y)+\sum_{i,j=1}^n\frac{\p^2 [F(x,(\cdot))]}{2\p t_i\p t_j}(\theta a_1(y),\ldots,\theta a_1(y))a_i(y)a_j(y).
\end{align*}
This, together with the fact that the map $y\to(a_1(y),\ldots,a_n(y))$ is a bijection from $B_M(x,r)$ to $B(\mathbf{0},r)$, implies  that
\begin{align*}
|\nabla F(x,\mathbf{0})|-\frac{Cn^2r}{2}\leq \sup_{y\in B_M(x,r)}\frac{|f(y)-f(x)|}{r}\leq |\nabla F(x,\mathbf{0})|+\frac{Cn^2r}{2}.
\end{align*}
 Thus, for any $x\in B_M(p,\delta(p)/2)$,
\begin{align*}
		\lim_{r\to0}\sup_{y\in B_M(x,r)}\frac{|f(x)-f(y)|}{r}=|\nabla f(x)|_M
\end{align*}
converges uniformly on $B_M(p,\delta(p)/2)$.
From this and the  boundedness of $\supp f$, we deduce that \eqref{1066}
holds true. This finishes the proof of Proposition \ref{944}.
\end{proof}

Now, let $M$ be a complete $n$-dimensional Riemannian manifold whose Ricci
curvature is non-negative (see  \cite[p.\,97 and Charpter 7]{c92} for the precise definition).
Denote the canonical measure on $M$ by $\mu$ (see \cite[p.\,44]{c92} for the precise definition).
By \cite[Section 10.1]{hp00}, we find that $\mu$
is a doubling measure on $M$ and there exists a positive constant $C$ such that, for any
$f\in C_{\mathrm{c}}^\infty(M)$  and any ball
$B_M:=B_M(y_{B_M},r_{B_M})=\{z\in M:\ \rho_M(z,y_{B_M})<r_{B_M}\}\subset M,$
\begin{align*}
\int_{{B_M}}|f(x)-f_{B_M}|\,d\mu(x)\leq Cr_{{B_M}}\int_{{B_M}}|\nabla f(x)|\,dx,
\end{align*}
where $$f_{B_M}:=\frac{1}{\mu({B_M})}
\int_{B_M}f(x)\,d\mu(x).$$ This, together with
Proposition \ref{944}, implies that, for any  $f\in C_{\mathrm{c}}^\infty(M),$  $f$ satisfies a
$(1,1)$-Poincar\'e inequality with constants independent of $f$. By
Theorem \ref{thm-2-3} and Corollaries \ref{corollart6.5} and \ref{corollary6.6}, we have the following conclusion, we omit the details here.
\begin{theorem}\label{13}
Let $(M,\,\rho_M,\,\mu)$ be a complete $n$-dimensional Riemannian manifold whose Ricci curvature is non-negative, where $\mu$ is the canonical measure on $M$. Then, for any $f\in C_{\mathrm{c}}^\infty(M)$,
\begin{equation*}
	\sup_{\ld\in(0,\infty)} 	\ld \int_{{M}}\int_{{M}} \mathbf{1}_{{D_{\ld,\,1}}}(x, y) \,d\mu(x) \,d\mu(y) \sim 	\int_{{M}} |\nabla f(x)|_M  \,d\mu(x),
\end{equation*}
where
\begin{align*}
	D_{\ld,\,1} :=\left\{ (x, y)\in {M}\times {M}:\  |f(x)-f(y)|>\ld \rho_M(x,y) V(x,y) \right\}
\end{align*}
and the  positive
equivalence constants are independent of $f$.
\end{theorem}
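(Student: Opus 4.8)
The plan is to obtain Theorem~\ref{13} as a direct specialization of Theorem~\ref{thm-2-3} (and, for the accompanying fractional Sobolev and Gagliardo--Nirenberg estimates, of Corollaries~\ref{corollart6.5} and~\ref{corollary6.6}) to the metric measure space $M$ equipped with $\rho_M$ and the canonical Riemannian measure $\mu$, in the case $p=q=1$. Thus the real content is to verify the three hypotheses of Theorem~\ref{thm-2-3} in this setting: that $(M,\rho_M,\mu)$ is a metric measure space of homogeneous type; that every $f\in C_{\mathrm{c}}^\infty(M)$ belongs to $\cC_{\mathrm{c}}^\ast(M)$; and that such an $f$ satisfies a $(1,1)$-Poincar\'e inequality in the sense of Definition~\ref{45622}. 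Once these are checked, the equivalence is immediate: for $p=1$ one has $[V(x,y)]^{1/p}=V(x,y)$, so the set $D_\ld$ of \eqref{keydef} is precisely $D_{\ld,1}$, and Proposition~\ref{944} gives $\blip f=|\nabla f|_M$, so the right-hand side of \eqref{2-9-eqq} equals $\int_M|\nabla f(x)|_M\,d\mu(x)$.

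\emph{Homogeneity.} Since $M$ is complete with non-negative Ricci curvature, the Bishop--Gromov volume comparison yields the doubling estimate $\mu(B_M(x,2r))\le 2^n\mu(B_M(x,r))$ for all $x\in M$ and $r\in(0,\infty)$; this, along with $\mu(B_M(x,r))\in(0,\infty)$ for every ball and $\mu(\{x\})=0$, is recorded in \cite[Section 10.1]{hp00}, so all the standing assumptions on the underlying space are met.

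\emph{Membership in $\cC_{\mathrm{c}}^\ast(M)$ and the Poincar\'e inequality.} For $f\in C_{\mathrm{c}}^\infty(M)$, Proposition~\ref{944} shows that $\sup_{y\in B_M(x,r)}|f(x)-f(y)|/r\to|\nabla f(x)|_M$ uniformly on $M$ as $r\to0$; hence $\lip f=\blip f=|\nabla f|_M\in C_{\mathrm{c}}(M)$ and $f\in\cC_{\mathrm{c}}^\ast(M)$. For the Poincar\'e inequality I would take, for each ball $B\in\BB$, the linear functional $\ell_B(\vi):=\frac1{\mu(B)}\int_B\vi\,d\mu$ on $BC(M)$. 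Then $\ell_B(1)=1$ and $|\ell_B(\vi)|\le\frac1{\mu(B)}\int_B|\vi|\,d\mu$, which is \eqref{3-9-a} with $q=1$; and \eqref{2-8-eq} with $q=p=1$ and $\tau=1$ becomes
\[
\frac1{\mu(B)}\int_B|f(x)-\ell_B(f)|\,d\mu(x)\le C_2\,r_B\,\frac1{\mu(B)}\int_B[\lip f(x)]\,d\mu(x),
\]
which is exactly the classical $L^1$-Poincar\'e inequality for complete manifolds of non-negative Ricci curvature (valid at every scale; see \cite[Section 10.1]{hp00}), once $\lip f$ is identified with $|\nabla f|_M$ via Proposition~\ref{944}. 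Hence every $f\in C_{\mathrm{c}}^\infty(M)$ satisfies a $(1,1)$-Poincar\'e inequality with constants independent of $f$, Theorem~\ref{thm-2-3} applies, and the asserted equivalence follows; the fractional Sobolev and Gagliardo--Nirenberg estimates then come from Corollaries~\ref{corollart6.5} and~\ref{corollary6.6} by the same argument.

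\emph{Main obstacle.} The one genuinely nontrivial step is the $(1,1)$-Poincar\'e inequality, which hinges on two inputs: the \emph{global} (all-scale) $L^1$-Poincar\'e inequality on complete manifolds with non-negative Ricci curvature, itself a consequence of Li--Yau/Buser-type gradient estimates, and the identification $\lip f=|\nabla f|_M$ for $f\in C_{\mathrm{c}}^\infty(M)$ furnished by Proposition~\ref{944}. The doubling property, the membership $f\in\cC_{\mathrm{c}}^\ast(M)$, and the passage to Theorem~\ref{thm-2-3} are then routine.
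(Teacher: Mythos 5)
Your proposal is correct and follows essentially the same route as the paper: the paper also invokes the doubling property and the local $L^1$-Poincar\'e inequality (with averages as the linear functionals $\ell_B$ and $\tau=1$) from \cite[Section 10.1]{hp00}, identifies $\lip f=\blip f=|\nabla f|_M$ and $C_{\mathrm{c}}^\infty(M)\subset\cC_{\mathrm{c}}^\ast(M)$ via Proposition~\ref{944}, and then specializes Theorem~\ref{thm-2-3} with $p=q=1$. Your explicit appeal to Bishop--Gromov and Buser's inequality simply unpacks the same ingredients that the paper cites wholesale.
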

\begin{remark}
	To the best of our knowledge, the result of Theorem \ref{13} is totally new.
\end{remark}
\begin{corollary}\label{14}
Let $(M,\,\rho_M,\,\mu)$ be a complete $n$-dimensional Riemannian manifold
whose Ricci curvature is non-negative and $p\in[1,\infty)$, where $\mu$ is
the canonical measure on $M$. Then, for any $f\in C_{\mathrm{c}}^\infty(M)$,
\eqref{hehe1} holds trues, where the  positive
equivalence constants are independent of $f$.
\end{corollary}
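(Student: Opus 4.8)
The plan is to obtain Corollary~\ref{14} as an immediate application of Corollary~\ref{corollart6.5}, so the whole argument reduces to checking that the Riemannian manifold $(M,\rho_M,\mu)$ with its canonical measure fits the hypotheses there. Concretely, I would verify two things: first, that $C_{\mathrm{c}}^\infty(M)\subset\cC_{\mathrm{c}}^\ast(M)$; second, that every $f\in C_{\mathrm{c}}^\infty(M)$ satisfies a $(1,1)$-Poincar\'e inequality (in the sense of Definition~\ref{45622}) with constants independent of $f$. The first point is exactly Proposition~\ref{944}, which in addition gives $\blip f=\lip f=|\nabla f|_M$ pointwise on $M$; this identification is what makes the right-hand side $\|f\|_{L^\infty(\mu)}^{1-1/p}\|\blip f\|_{L^1(\mu)}^{1/p}$ of \eqref{hehe1} equal to $\|f\|_{L^\infty(\mu)}^{1-1/p}[\int_M|\nabla f(x)|_M\,d\mu(x)]^{1/p}$, with all norms taken against the canonical measure.

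For the Poincar\'e inequality I would take $\{\ell_B\}_{B\in\BB}$ to be the averaging functionals $\ell_B(\vi):=\frac{1}{\mu(B)}\int_B\vi(x)\,d\mu(x)$ on $BC(M)$. Then $\ell_B(1)=1$ and $|\ell_B(\vi)|\le\frac{1}{\mu(B)}\int_B|\vi(x)|\,d\mu(x)$, so \eqref{3-9-a} holds with $q=1$ and $C_1=1$. The remaining estimate \eqref{2-8-eq} with $q=1$, $\tau=1$, and $\ell_B(f)=f_B$ is precisely the $L^1$-Poincar\'e (Buser-type) inequality on complete $n$-manifolds of non-negative Ricci curvature recorded in \cite[Section~10.1]{hp00}, once one uses Proposition~\ref{944} to rewrite the gradient term $|\nabla f|$ as $\lip f$ in the integrand; the doubling property of $\mu$ (Bishop--Gromov volume comparison), also taken from \cite[Section~10.1]{hp00}, makes $(M,\rho_M,\mu)$ a metric measure space of homogeneous type. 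Since the resulting $C_1$, $C_2$, $\tau$ depend only on $n$, every $f\in C_{\mathrm{c}}^\infty(M)$ satisfies a $(1,1)$-Poincar\'e inequality with constants independent of $f$.

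With both hypotheses verified, I would apply Corollary~\ref{corollart6.5} with $\cx:=M$, $\rho:=\rho_M$, $d\mu$ the canonical measure, $q=1$, and the given $p\in[1,\infty)$; its conclusion is \eqref{hehe1}, with $V(x,y)=\mu(B_M(x,\rho_M(x,y)))$ and equivalence constant depending only on $C_1$, $C_2$, $\tau$, $p$, $q$, hence ultimately only on $n$ and $p$ and in particular independent of $f$. I do not expect a genuine obstacle: the analytic heart of the matter---the geometric relation between differences and derivatives (Lemma~\ref{lem-2-33}) and the product-space Vitali covering argument behind the upper bound---has already been absorbed into Theorem~\ref{thm-2-3} and thus into Corollary~\ref{corollart6.5}, so what remains is only the bookkeeping of the hypotheses together with the two standard facts about manifolds of non-negative Ricci curvature (doubling volume and the $L^1$-Poincar\'e inequality) cited above.
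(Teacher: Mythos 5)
Your proposal is correct and follows essentially the same route as the paper: the paper's own discussion preceding Theorem~\ref{13} cites \cite[Section 10.1]{hp00} for the doubling property and the $L^1$-Poincar\'e inequality on complete manifolds of non-negative Ricci curvature, uses Proposition~\ref{944} to obtain $C_{\mathrm{c}}^\infty(M)\subset\cC_{\mathrm{c}}^\ast(M)$ with $\blip f=\lip f=|\nabla f|_M$, checks that the averaging functionals $\ell_B(\vi)=\mu(B)^{-1}\int_B\vi\,d\mu$ satisfy Definition~\ref{45622} with $q=\tau=1$, and then invokes Corollary~\ref{corollart6.5}. Your verification of the hypotheses and the chain of reductions match the paper's (omitted) proof step by step.
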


\begin{remark}
	To the best of our knowledge, the result of Corollary \ref{14} is totally new.
\end{remark}

\begin{corollary}\label{15}
Let $s_1 \in(0,1)$, $p_1\in(1,\infty),$
$\ta\in (0,1)$, and $q\in[1,\infty)$. Let $s\in (s_1, 1)$ and
$p\in (1, p_1)$ satisfy $s:=(1-\theta) s_1+\theta$
and $ \frac{1}{p}:=\frac{1-\theta}{p_1}+\theta$. Let $(M,\,\rho_M,\,\mu)$ be
a complete $n$-dimensional Riemannian manifold whose Ricci curvature is
non-negative and $p\in[1,\infty)$, where $\mu$ is the canonical measure on $M$.
Then, for any $f\in C_{\mathrm{c}}^\infty(M)$, \eqref{hehe2} holds trues, where the  positive
equivalence constants are independent of $f$.
\end{corollary}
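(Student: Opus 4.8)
The plan is to recognize Corollary \ref{15} as a direct specialization of Corollary \ref{corollary6.6} to the metric measure space $(M,\rho_M,\mu)$, with the choice $q:=1$ (the symbol $q$ appearing in the hypotheses of the present corollary is vestigial: it does not enter \eqref{hehe2}). Thus it will suffice to check, for every $f\in C_{\mathrm{c}}^\infty(M)$, the three structural facts required by Corollary \ref{corollary6.6}: that $(M,\rho_M,\mu)$ is a metric measure space of homogeneous type, that $f\in\cC_{\mathrm{c}}^\ast(M)$, and that $f$ satisfies a $(1,1)$-Poincar\'e inequality in the sense of Definition \ref{45622} with constants independent of $f$. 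All three have in fact already been assembled in the discussion immediately preceding Theorem \ref{13}, so the proof will largely be a matter of invoking them in the right order.

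First I would record that $\rho_M$ is a metric on $M$ (by \cite[Lemma 6.2]{L13}) and that the canonical measure $\mu$ is doubling, with doubling constant depending only on $n$, as a consequence of the Bishop--Gromov volume comparison under non-negativity of the Ricci curvature (see \cite[Section 10.1]{hp00} and \cite{c92}); together with $\mu(B_M)\in(0,\infty)$ for every ball and $\mu(\{x\})=0$ for every $x\in M$, this makes $(M,\rho_M,\mu)$ a metric measure space of homogeneous type. Next, Proposition \ref{944} gives $C_{\mathrm{c}}^\infty(M)\subset\cC_{\mathrm{c}}^\ast(M)$, together with $\blip f=\lip f=|\nabla f|_M$ pointwise and uniform convergence of the difference quotients. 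For the Poincar\'e inequality I would take, for each ball $B=B_M(y_B,r_B)$ and each $\varphi\in BC(M)$, the linear functional $\ell_B(\varphi):=\mu(B)^{-1}\int_B\varphi\,d\mu$; then $\ell_B(1)=1$ and $|\ell_B(\varphi)|\le\mu(B)^{-1}\int_B|\varphi|\,d\mu$, so \eqref{3-9-a} holds with $C_1=1$ and $q=1$, while \eqref{2-8-eq} with $q=p=\tau=1$ is precisely the scale-invariant $L^1$-Poincar\'e inequality $\int_B|f-f_B|\,d\mu\le C\,r_B\int_B|\nabla f|_M\,d\mu$ recalled from \cite[Section 10.1]{hp00}, after rewriting $|\nabla f|_M=\lip f$ via Proposition \ref{944}; the constant $C$ here depends only on $n$.

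Having verified these hypotheses, I would then apply Corollary \ref{corollary6.6} with $\cX:=M$, $\rho:=\rho_M$, $\mu$ the canonical measure, $q:=1$, $C_1=1$, $C_2=C$, $\tau=1$, and the prescribed parameters $s_1,p_1,\theta,s,p$, which yields \eqref{hehe2} verbatim; since the constant furnished by Corollary \ref{corollary6.6} depends only on $C_1,C_2,p,q,\tau$, and these are controlled by $n$ and the fixed exponents, the resulting equivalence constants are independent of $f$. I do not anticipate a genuine obstacle: the only substantive analytic inputs — uniform convergence of the difference quotients of $f$ to $|\nabla f|_M$ and the scale-invariant $L^1$-Poincar\'e inequality on $M$ — are already available (Proposition \ref{944} and \cite[Section 10.1]{hp00} respectively), and what remains is the purely formal task of matching them against the normalization built into Definition \ref{45622}. (The companion estimate Corollary \ref{14} follows in the same way, reading the argument through Corollary \ref{corollart6.5} in place of Corollary \ref{corollary6.6}.)
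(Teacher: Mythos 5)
Your proof is correct and follows exactly the route the paper intends: the paper's own proof is the sentence ``we omit the details here'' pointing to the discussion immediately before Theorem \ref{13}, which assembles precisely the ingredients you spell out (doubling from Bishop--Gromov via \cite[Section 10.1]{hp00}, $C_{\mathrm{c}}^\infty(M)\subset\cC_{\mathrm{c}}^\ast(M)$ and $\lip f=|\nabla f|_M$ from Proposition \ref{944}, and the $(1,1)$-Poincar\'e inequality with $\ell_B$ the ball average) before invoking Corollary \ref{corollary6.6}. Your observation that the $q$ in the hypotheses of Corollary \ref{15} is vestigial is also accurate.
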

\begin{remark}
	To the best of our knowledge, the result of Corollary \ref{15} is totally new.

\end{remark}

\bigskip

\smallskip

\noindent Feng Dai

\smallskip

\noindent Department of Mathemtical and
Statistical Sciences, University of Alberta
Edmonton, Alberta T6G 2G1, Canada

\smallskip

\noindent {\it E-mail}: \texttt{fdai@ualberta.ca}

\bigskip

\noindent Xiaosheng Lin, Dachun Yang
(Corresponding author),
Wen Yuan and Yangyang Zhang

\smallskip

\noindent Laboratory of Mathematics and Complex Systems
(Ministry of Education of China),
School of Mathematical Sciences, Beijing Normal University,
Beijing 100875, People's Republic of China

\smallskip

\noindent {\it E-mails}: \texttt{xiaoslin@mail.bnu.edu.cn} (X. Lin)

\noindent\phantom{{\it E-mails:}} \texttt{dcyang@bnu.edu.cn} (D. Yang)

\noindent\phantom{{\it E-mails:}} \texttt{wenyuan@bnu.edu.cn} (W. Yuan)

\noindent\phantom{{\it E-mails:}} \texttt{yangyzhang@mail.bnu.edu.cn} (Y. Zhang)

\end{document}